\newtheorem{theorem}{Theorem}[section]
\newtheorem{definition}[theorem]{Definition}
\newtheorem{lemma}[theorem]{Lemma}
\newtheorem{proposition}[theorem]{Proposition}
\newtheorem{remark}[theorem]{Remark}
\newtheorem{example}[theorem]{Example}
\author{Dieter Degrijse}
\address{Department of Mathematics, Catholic University of Leuven, Kortrijk, Belgium}%
\email{Dieter.Degrijse@kuleuven-kortrijk.be}%
\author{Nansen Petrosyan}
\address{Department of Mathematics, Catholic University of Leuven, Kortrijk, Belgium}%
\email{Nansen.Petrosyan@kuleuven-kortrijk.be}%
\title{On cohomology of split Lie algebra extensions}
\thanks{The second author was supported by FWO-Flanders Research Fellowship.}
\subjclass{}%
\keywords{Lie algebra cohomology, free resolutions, Hochschild-Serre spectral sequence}%
\date{\today\\
\indent \small 2010 {\it Mathematics Subject Classification.} 17B56(primary), 18G60, 18G40(secondary).}
\begin{document}
\begin{abstract}
\noindent We introduce the notion of compatible actions in the context of split extensions of Lie algebras over a field $k$. Using compatible actions, we construct new resolutions to compute the cohomology of semi-direct products of Lie algebras and give an alternative way to construct the Hochschild-Serre spectral sequence associated to a split extension. Finally, we describe several instances in which this spectral sequence collapses at the second page and obtain a sharper bound for its length in the finite dimensional case.
\end{abstract}
\maketitle
\section{Introduction}
In \cite{Evens}, L. Evens constructed a resolution to compute the cohomology of the semi-direct product $H \rtimes G$ of two groups.  This resolution arose by considering a special action of $G$ on a free resolution for $H$. The construction was later made explicit by T. Brady in \cite{Brady} where he named it a {\it compatible action}.
This approach has proven to be very useful for computing the cohomology of certain semi-direct product groups such as crystallographic groups  (see for example \cite{AdemPan} and \cite{AdemPanGePetr}). \\
\indent In this paper, we define the analogue of compatible group actions in the context of Lie algebras over a field $k$. More concretely, we consider
a split extension of Lie algebras
$ 0 \to \mathfrak{n} \to \mathfrak{g} \to \mathfrak{h} \to 0 $
over a field $k$, a free resolution  $P \rightarrow k$ for $\mathfrak{h}$ and a free resolution $F \to k$ for $\mathfrak{n}$. Then we define the notion of compatible action in such a way that, if $\mathfrak{h}$ acts compatibly on $F$, we can define a $\mathfrak{g}$-module structure on $P \otimes_k F$ that turns this complex into a free resolution for $\mathfrak{g}$. Using this fact, we obtain an alternative way to construct the Hochschild-Serre spectral sequence of a split Lie algebra extension, from which we derive the following.

\begin{theorem} \label{th: introduction3}
Suppose $0 \to \mathfrak{n} \to \mathfrak{g} \to \mathfrak{h} \to 0 $
is a split extension of Lie algebras. Let M be a $\mathfrak{g}$-module and denote by $(E_r,d_r)$ the associated Hochschild-Serre spectral sequence.
If $\mathfrak{h}$ acts compatibly on a free  $U(\mathfrak{n})$-resolution $F$ such that the differential
\[ d^{q-1}: \mathrm{Hom}_{\mathfrak{n}}(F_{q-1},M) \rightarrow \mathrm{Hom}_{\mathfrak{n}}(F_q,M) \]
is zero, then $d^{p,q}_r$ and $d^{p,q+r-2}_r$ are zero for all $p$ and all $r\geq 2$.
\end{theorem}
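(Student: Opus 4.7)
The plan is to exploit the concrete realization of the Hochschild--Serre spectral sequence furnished by the compatible action construction of this paper. Since the compatible $\mathfrak{h}$-action on $F$ turns $P \otimes_k F$ into a free $U(\mathfrak{g})$-resolution of $k$, applying $\mathrm{Hom}_{\mathfrak{g}}(-, M)$ produces the bicomplex
\[ K^{p,q} = \mathrm{Hom}_{\mathfrak{h}}\bigl(P_p, \mathrm{Hom}_{\mathfrak{n}}(F_q, M)\bigr), \]
whose total complex computes $H^{*}(\mathfrak{g}, M)$ and whose column-filtration spectral sequence is exactly $(E_r, d_r)$. The hypothesis $d^{q-1} = 0$ then says that the vertical differential $d_v \colon K^{p, q-1} \to K^{p, q}$ vanishes for every $p$.

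I would then argue using the standard zig-zag description in the filtered total complex. A class in $E_r^{p, a}$ is represented by an element $z = \sum_{i \geq 0} x_i \in F^p$, with $x_i \in K^{p+i, a-i}$, subject to $d_v x_0 = 0$ and $d_h x_{i-1} + d_v x_i = 0$ for $1 \leq i \leq r-1$; and $d_r[z] = [d_h x_{r-1}]$ in $E_r^{p+r, a-r+1}$. For the first vanishing ($a = q$), the witness $x_1$ lies in $K^{p+1, q-1}$, on which $d_v$ vanishes by hypothesis; hence $d_h x_0 = 0$. Thus $x_0$ is itself a total cocycle, and since $z - x_0 \in F^{p+1}$ with $d(z - x_0) = dz \in F^{p+r}$, the difference lies in $Z_{r-1}^{p+1, q-1}$. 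Therefore $[z] = [x_0]$ in $E_r^{p, q}$, giving $d_r[z] = [dx_0] = 0$.

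For the second vanishing ($a = q + r - 2$), the case $r = 2$ reduces to the first. For $r \geq 3$, the relevant witness $x_{r-1}$ lies in $K^{p+r-1, q-1}$, so the hypothesis yields $d_v x_{r-1} = 0$ and hence $d x_{r-1} = d_h x_{r-1}$. This exhibits $d_h x_{r-1}$ as the differential of an element of $F^{p+r-1} \subseteq F^{p+2}$, so $d_h x_{r-1} \in B_{r-1}^{p+r, q-1}$ and $d_r[z] = 0$ in $E_r^{p+r, q-1}$.

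The main hurdle is purely bookkeeping in the filtration-spectral-sequence definitions of $Z_r$ and $B_r$. In particular, the inclusion $F^{p+r-1} \subseteq F^{p+2}$ that makes the second argument work requires $r \geq 3$, which is what forces the split between the two cases; beyond that, everything follows formally from the row-vanishing reformulation of the hypothesis.
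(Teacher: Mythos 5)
The proposal is correct and takes essentially the same route as the paper: the paper likewise realizes $(E_r,d_r)$ as the column-filtration spectral sequence of the double complex $\mathrm{Hom}_{\mathfrak{h}}(P,\mathrm{Hom}_{\mathfrak{n}}(F,M))$, observes that the hypothesis kills the vertical differential on row $q-1$, and carries out the same $Z_r$/$B_{r-1}$ bookkeeping (packaged there as a separate lemma on first-quadrant double complexes, with the tail $x_{r-1}+x_r+\cdots$ used in place of the single term $x_{r-1}$).
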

The accessibility of this construction, of course, depends on the fact whether a particular resolution for $\mathfrak{n}$ admits a compatible action of $\mathfrak{h}$. As it turns out, $\mathfrak{h}$ always acts compatibly on the Chevalley-Eilenberg complex of $\mathfrak{n}$.  This allows us to form a
practical cochain complex for computing the cohomology of $\mathfrak{g}$. By explicitly constructing compatible actions, we obtain a collapse of the Hochschild-Serre spectral sequence in the following cases.
\begin{theorem} Consider the split extension $0 \rightarrow \mathfrak{n} \rightarrow \mathfrak{g} \rightarrow \mathfrak{h} \rightarrow 0$
determined by $\varphi: \mathfrak{h} \rightarrow \mathrm{Der}(\mathfrak{n})$. Let $M$ be a $\mathfrak{g}$-module with a trivial ${\mathfrak{n}}$-action. Then the Hochschild-Serre spectral sequence associated to this extension with coefficients in $M$ collapses at $E_2$ in the following cases
\begin{itemize}
\item[(a)] $\mathfrak{n}=\mathfrak{n}_1 \oplus \mathfrak{n}_2$, where $\mathfrak{n}_1$ is either abelian or free and $\mathfrak{n}_2$ is either abelian or free;

\smallskip

\item[(b)] $\mathfrak{n}=\mathfrak{n}_1 * \mathfrak{n}_2* \ldots * \mathfrak{n}_k$, where each of the $\mathfrak{n}_i$ is either abelian or free and $\varphi(\alpha)(\mathfrak{n}_i)\subseteq \mathfrak{n}_i$ $\forall \alpha \in \mathfrak{h}$ and $i=1,\ldots, k$.
\end{itemize}
\end{theorem}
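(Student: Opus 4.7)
My strategy is to reduce each case to Theorem \ref{th: introduction3}: in each situation I will exhibit a free $U(\mathfrak{n})$-resolution $F \to k$ equipped with a compatible $\mathfrak{h}$-action for which every differential of the cochain complex $\mathrm{Hom}_{\mathfrak{n}}(F_\bullet, M)$ vanishes. Once this is in place, Theorem \ref{th: introduction3} applies at every row $q$, so all $d_r^{p,q}$ are zero for $r \geq 2$, forcing collapse at $E_2$.

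First I will dispose of two building-block situations. When $\mathfrak{a}$ is abelian, the Chevalley-Eilenberg resolution $U(\mathfrak{a}) \otimes \Lambda^\bullet \mathfrak{a}$ already carries a compatible $\mathfrak{h}$-action (as asserted in the introduction), and its cochain differential vanishes identically, since both the brackets in $\mathfrak{a}$ and the $\mathfrak{a}$-action on $M$ are trivial. When $\mathfrak{f}$ is free on a vector space $V$, I will use the short resolution $0 \to U(\mathfrak{f}) \otimes V \to U(\mathfrak{f}) \to k \to 0$; its only non-trivial cochain differential sends $m \in M$ to $v \mapsto v \cdot m$, which is zero because $\mathfrak{n}$ acts trivially on $M$. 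A compatible $\mathfrak{h}$-action on this short resolution can be produced by lifting $\varphi(\alpha)$ via the comparison theorem.

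For part (a), I will set $F = F_1 \otimes_k F_2$, where $F_i$ is the relevant building-block resolution of $\mathfrak{n}_i$. The isomorphism $U(\mathfrak{n}) \cong U(\mathfrak{n}_1) \otimes_k U(\mathfrak{n}_2)$ makes $F$ a free $U(\mathfrak{n})$-resolution of $k$, and the total cochain differential decomposes into a signed sum of the two building-block differentials, hence vanishes. For part (b), I will use the Mayer-Vietoris-style resolution for free products of Lie algebras, assembled from the induced complexes $U(\mathfrak{n}) \otimes_{U(\mathfrak{n}_i)} F^{(i)}$. The hypothesis $\varphi(\alpha)(\mathfrak{n}_i) \subseteq \mathfrak{n}_i$ ensures that each action on $F^{(i)}$ patches into a compatible action on the assembled resolution, and the cochain differentials again decompose factor by factor and vanish.

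The main obstacle will be verifying that a genuine compatible $\mathfrak{h}$-action exists on each assembled resolution. In part (a), the subtlety is that $\varphi(\alpha)$ need not preserve the decomposition $\mathfrak{n}_1 \oplus \mathfrak{n}_2$, so the action on $F_1 \otimes_k F_2$ cannot simply be the Leibniz extension of actions on the individual factors; it has to be produced by an inductive lifting argument using freeness and the comparison theorem, with the compatibility axioms checked by hand. In part (b), the analogous issue is that the free-product resolution is not a tensor product, so one must separately verify compatibility across the summands $U(\mathfrak{n}) \otimes_{U(\mathfrak{n}_i)} F^{(i)}$ and on the Mayer-Vietoris connecting maps.
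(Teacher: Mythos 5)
Your overall strategy --- exhibit, for each case, a free $U(\mathfrak{n})$-resolution $F$ carrying a compatible $\mathfrak{h}$-action for which every differential of $\mathrm{Hom}_{\mathfrak{n}}(F,M)$ vanishes, and then apply Theorem \ref{th: collapse} in every row --- is exactly the paper's, and your choice of resolutions (Chevalley--Eilenberg for abelian factors, the two-term resolution $0\to U(\mathfrak{f})\otimes V\to U(\mathfrak{f})\to k\to 0$ for free factors, tensor products for direct sums, the induced/Mayer--Vietoris-type resolution for free products) matches the paper's as well. The verification that the cochain differentials vanish when $\mathfrak{n}$ acts trivially on $M$ is also correct.

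The gap is in the step you yourself flag as the main obstacle: the existence of the compatible actions. You propose to produce them ``by lifting $\varphi(\alpha)$ via the comparison theorem,'' resp.\ ``by an inductive lifting argument using freeness.'' This cannot work as stated, for two reasons. First, a compatible action in the sense of Definition \ref{def: comp act} is not a $U(\mathfrak{n})$-linear chain map: condition (\ref{eq: comp cond}) makes $\underline{\alpha}$ only semilinear over the derivation $\varphi(\alpha)$, so the standard comparison theorem does not even apply to it directly. Second, and more seriously, $\Theta:\mathfrak{h}\to\mathfrak{C}(F)$ must be an honest Lie algebra homomorphism, i.e.\ $\underline{[\alpha,\beta]}=\underline{\alpha}\circ\underline{\beta}-\underline{\beta}\circ\underline{\alpha}$ on the nose, whereas any lifting argument determines the lifts only up to chain homotopy, so the bracket relation is guaranteed only up to homotopy; removing that obstruction is precisely the nontrivial content. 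The paper makes no soft existence argument: it writes the actions down explicitly (Proposition \ref{prop: comp act on chev} for the Chevalley--Eilenberg complex, Example \ref{ex: free lie algebra} for free kernels), and for a direct sum whose factors need not be preserved by $\varphi(\alpha)$ it decomposes $\mathrm{Der}(\mathfrak{n}_1\oplus\mathfrak{n}_2)$ into a semidirect product of the factor-preserving derivations and the off-diagonal ones, constructs an explicit compatible action of each piece, checks the gluing condition of Lemma \ref{lemma: quotient semi-direct product}, and only then pulls back along $\varphi$ via Lemma \ref{lemma: homomorphism} (Example \ref{ex: product free and abelian}); in case (b) the hypothesis $\varphi(\alpha)(\mathfrak{n}_i)\subseteq\mathfrak{n}_i$ is imposed exactly so that the explicit actions on the summands patch (Lemma \ref{ex: free product}). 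Until you either carry out such explicit constructions or prove a genuine existence theorem for compatible actions, the proof is incomplete at this point. (A minor remark: when both $\mathfrak{n}_1$ and $\mathfrak{n}_2$ are abelian no tensor-product argument is needed, since $\mathfrak{n}$ is then abelian and Proposition \ref{prop: comp act on chev} applies directly.)
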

\indent In \cite{Barnes}, D. Barnes showed that the length  $l$ of the Hochschild-Serre spectral sequence associated to a split extension of finite dimensional Lie algebras with kernel $\mathfrak{n}$  satisfies  $l \leq \max{\{2,\dim_k(\mathfrak{\mathfrak{n}})\}}$ when $\mathfrak{n}$ is nilpotent and acts trivially on the coefficient space. As another corollary of Theorem \ref{th: introduction3}, we prove the following generalization of this result.

\begin{theorem} Suppose  $0 \rightarrow \mathfrak{n} \rightarrow \mathfrak{g} \rightarrow \mathfrak{h} \rightarrow 0 $
is a split extension of Lie algebras such that $\dim_k(\mathfrak{\mathfrak{n}})=m < \infty$. Denote by $(E_r,d_r)$ the associated Hochschild-Serre spectral sequence with coefficients in a $\mathfrak{g}$-module $M$.
If $\mathfrak{n}$ acts trivially on $M$, then
\begin{itemize}
\item[(a)] $d_r^{p, m}=0$ for all $p$ and all $r\geq 2$;

\smallskip

\item[(b)] $l \leq \max{\{2, m\}}$;

\smallskip

\item[(c)] $\mathrm{H}^p(\mathfrak{h}, \mathrm{H}^{m}(\mathfrak{n},M))\oplus\mathrm{H}^{p+m}(\mathfrak{h}, M)\subseteq \mathrm{H}^{p+m}(\mathfrak{g},M)$ for all $p$.
\end{itemize}
\end{theorem}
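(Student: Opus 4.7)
The plan is to derive (b) and (c) from (a); for (a) I invoke Theorem~\ref{th: introduction3} after a short calculation on the Chevalley--Eilenberg resolution. By the remark following that theorem, $\mathfrak h$ acts compatibly on the Chevalley--Eilenberg resolution $F_\bullet$ of the base field as a $U(\mathfrak n)$-module, and since $\dim_k\mathfrak n=m$ one has $F_q=0$ for $q>m$ with $F_m\cong U(\mathfrak n)$, so that $\mathrm{Hom}_{\mathfrak n}(F_m,M)\cong M$. I use this resolution throughout.

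For part (a), I examine the top cochain differential $d^{m-1}\colon \mathrm{Hom}_k(\Lambda^{m-1}\mathfrak n,M)\to M$. Because $\mathfrak n$ acts trivially on $M$, the action part of the Chevalley--Eilenberg formula disappears, and after antisymmetrising the surviving bracket terms one reads off the identity
\[ d^{m-1}(\varphi)(e_1,\dots,e_m)=\sum_{j=1}^{m}(-1)^j\,\mathrm{tr}\bigl(\mathrm{ad}_{\mathfrak n}(e_j)\bigr)\,\varphi\bigl(e_1\wedge\cdots\wedge\widehat{e_j}\wedge\cdots\wedge e_m\bigr) \]
for any basis $e_1,\dots,e_m$ of $\mathfrak n$. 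This yields a clean dichotomy: if $\mathfrak n$ is unimodular then the right-hand side vanishes, so $d^{m-1}=0$ and Theorem~\ref{th: introduction3} with $q=m$ gives $d_r^{p,m}=0$ for all $p$ and $r\ge 2$; if $\mathfrak n$ is not unimodular the same formula makes $d^{m-1}$ surjective onto $M$, so $\mathrm{H}^m(\mathfrak n,M)\cong \mathrm{H}^m(\mathfrak n,k)\otimes_k M=0$, the row $E_2^{\bullet,m}$ is identically zero, and (a) is vacuous.

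Part (b) is then bookkeeping. Since the $E_2$-page lives in the strip $0\le q\le m$, a differential $d_r^{p,q}$ can have non-zero source and target only if $r\le m+1$; moreover, the only $d_{m+1}$ whose source lies in the strip is $d_{m+1}^{p,m}$, which is killed by (a). Hence $d_r=0$ for every $r\ge m+1$, giving $l\le m$; combined with the automatic bound $l\le 2$ in the degenerate cases $m\in\{0,1\}$ this yields $l\le\max\{2,m\}$. For part (c), the splitting $\sigma\colon\mathfrak h\to\mathfrak g$ induces a cochain retraction $\sigma^*$ of the inflation $\pi^*$, so $\pi^*$ embeds $\mathrm{H}^{p+m}(\mathfrak h,M)$ as a direct summand of $\mathrm{H}^{p+m}(\mathfrak g,M)$ whose image sits in the filtration piece $F^{p+m}\mathrm{H}^{p+m}(\mathfrak g,M)\subseteq F^{p+1}\mathrm{H}^{p+m}(\mathfrak g,M)$ (for $m\ge 1$). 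Part (a) identifies $E_\infty^{p,m}=E_2^{p,m}=\mathrm{H}^p(\mathfrak h,\mathrm{H}^m(\mathfrak n,M))$, so any vector-space complement of $F^{p+1}$ inside $F^p\mathrm{H}^{p+m}(\mathfrak g,M)$ is isomorphic to this group and meets $\pi^*(\mathrm{H}^{p+m}(\mathfrak h,M))$ only in zero, producing the asserted direct-sum embedding.

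The main obstacle is part (a): the cochain differential $d^{m-1}$ is generally non-zero, so Theorem~\ref{th: introduction3} cannot be invoked blindly. The content of the trace calculation is that the \emph{only} obstruction to its vanishing is the modular character of $\mathfrak n$, and that this same character also annihilates the entire top row $E_2^{\bullet,m}$ whenever it is non-zero. Once this dichotomy is in place, (b) and (c) follow formally.
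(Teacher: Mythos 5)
Parts (a) and (c) of your argument are sound and essentially in line with the paper: your unimodularity/trace dichotomy for $d^{m-1}$ is just an explicit form of the paper's own dichotomy (either $\mathrm{H}^m(\mathfrak{n},M)=0$, so the top row vanishes identically, or $d^{m-1}=0$ on the Chevalley--Eilenberg cochain complex and Theorem~\ref{th: collapse} applies with $q=m$), and your use of the splitting to embed $\mathrm{H}^{p+m}(\mathfrak{h},M)$ into $\mathrm{H}^{p+m}(\mathfrak{g},M)$ is exactly the alternative route the paper indicates in its closing remark, while the identification $E_\infty^{p,m}=E_2^{p,m}$ from (a) is the same as in the paper.

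The genuine gap is in (b). From ``$d_r=0$ for all $r\ge m+1$'' you may conclude only $E_{m+1}=E_\infty$, i.e.\ $l\le m+1$ under the paper's definition of length; the claimed bound $l\le\max\{2,m\}$ also requires $d_m=0$ on the whole page $E_m$ (for $m\ge 2$). Part (a) kills $d_m^{p,m}$, but the differential $d_m^{p,m-1}\colon E_m^{p,m-1}\to E_m^{p+m,0}$ (for $m=2$ this is $d_2^{p,1}$) is addressed nowhere in your argument: its source lies strictly below row $m$, so (a) says nothing about it, and both its source and target lie inside the strip $0\le q\le m$, so the dimension count does not kill it either. The paper disposes of all such differentials by observing that, since $\mathfrak{n}$ acts trivially on $M$, the differential $d^0\colon\mathrm{Hom}_{\mathfrak{n}}(V_0(\mathfrak{n}),M)\to\mathrm{Hom}_{\mathfrak{n}}(V_1(\mathfrak{n}),M)$ is zero, and applying Theorem~\ref{th: collapse} with $q=1$, which makes every $d_r^{p,r-1}$, i.e.\ every differential landing on the bottom row, vanish. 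Equivalently, the retraction $\sigma^*\circ\pi^*=\mathrm{id}$ that you already invoke in (c) shows $\pi^*$ is injective, hence the edge quotients $E_2^{s,0}\to E_\infty^{s,0}$ are isomorphisms and no nonzero differential can land on the bottom row. Either patch is available to you, but as written (b) does not follow from what you prove.
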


\section{Definitions, Notations and preliminary results}
\indent Suppose $R$ is a ring with unit, and let $(A,d^h,d^v)$ be a double complex of $R$-modules. We define the total complex $\mathscr{A}$ to be the chain complex with
$ \mathscr{A}_n = \bigoplus_{k+l=n}A_{k,l}$
and differential $d$ defined by $d^h+d^v$.

Now, let $(P,d)$ be a chain complex of right $R$-modules and let $(Q,d')$ be a chain complex of left $R$-modules. Then, we define the double complex $(B,d^h,d^v)$ as
$B_{p,q}=P_p\otimes_R Q_q$
\begin{align*}
d^h_{p,q}: B_{p,q} \rightarrow B_{p-1,q}, & \hspace{3mm} x\otimes y \mapsto d_p(x)\otimes y \\
d^v_{p,q}: B_{p,q} \rightarrow B_{p,q-1}, & \hspace{3mm} x\otimes y \mapsto (-1)^px\otimes d'_q(y).
                                \end{align*}
We define the tensor product of $P$ and $Q$ to be $\mathscr{B}$. In the future we will denote $B$ and $\mathscr{B}$ both by $P\otimes_R Q$; the meaning will be apparent from the context.

When $(P,d)$ is a chain complex of left $R$-modules and $(Q,d')$ is  a cochain complex of left $R$-modules, we define the double complex $(C,d_h,d_v)$ as
$C^{p,q}=\mathrm{Hom}_R(P_p,Q^q)$
\begin{align*}
 d_h^{p,q}: C^{p,q} \rightarrow C^{p+1,q}, & \hspace{3mm} f \mapsto f\circ d_{p+1}  \\
 d_v^{p,q}: C^{p,q} \rightarrow C^{p,q+1},  & \hspace{3mm} f \mapsto (-1)^pd'^q\circ f.
                                \end{align*}
We denote the total Hom cochain complex of $P$ and $Q$ by $\mathscr{C}$. Like before, we will abuse notation and denote both $C$ and $\mathscr{C}$ by $\mathrm{Hom}_R(P,Q)$. \\
\indent All Lie algebras we consider are over a fixed field $k$. Let $\mathfrak{g}$ be a Lie algebra. If $M$ and $N$ are $\mathfrak{g}$-modules then $M\otimes_k N$ and $\mathrm{Hom}_k(M,N)$ naturally become $\mathfrak{g}$-modules in the following way
\begin{align*}
\alpha(m \otimes n)&=\alpha m\otimes n + m \otimes \alpha n,  \hspace{3mm}  \alpha \in \mathfrak{g}, m \in M, n \in N;  \\
 (\alpha f)(m)&= \alpha f(m)-f(\alpha m),   \hspace{3mm}  \alpha \in \mathfrak{g}, m \in M, f \in \mathrm{Hom}_k(M,N).
                                \end{align*}
Some useful properties of these $\mathfrak{g}$-module structures are summarized in the following lemma.
\begin{lemma} \label{lem: gmod lemma}
There is a natural isomorphism
$ \mathrm{Hom}_k(M,N)^{\mathfrak{g}}\cong \mathrm{Hom}_{\mathfrak{g}}(M,N)$.
Also, the functor $\mathrm{Hom}_k(N,-): \mathfrak{g}\mbox{-mod}  \rightarrow \mathfrak{g} \mbox{-mod}$
is right adjoint to the functor
 $-\otimes_k N: \mathfrak{g}\mbox{-mod}   \rightarrow \mathfrak{g}\mbox{-mod}$,
which implies that there exists a natural isomorphism
\[ \mathrm{Hom}_{\mathfrak{g}}(M \otimes_k N,K) \cong \mathrm{Hom}_{\mathfrak{g}}(M,\mathrm{Hom}_k(N,K))\]
for all $\mathfrak{g}$-modules $M,N$ and $K$.
\end{lemma}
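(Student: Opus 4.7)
The statement bundles three assertions: (i) identification of $\mathfrak{g}$-invariants of $\mathrm{Hom}_k(M,N)$ with $\mathfrak{g}$-equivariant maps, (ii) the tensor-hom adjunction at the level of $\mathfrak{g}$-modules, and (iii) the resulting Hom-isomorphism. My plan is to treat (i) by direct unpacking of the definitions, use it together with the classical $k$-linear tensor-hom adjunction to establish (ii), and then observe that (iii) is a formal consequence.

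For (i), I would simply compare the two conditions. By the formula $(\alpha f)(m) = \alpha f(m) - f(\alpha m)$, the element $f\in\mathrm{Hom}_k(M,N)$ lies in $\mathrm{Hom}_k(M,N)^{\mathfrak{g}}$ iff $\alpha f(m)=f(\alpha m)$ for every $\alpha\in\mathfrak{g}$ and $m\in M$, which is exactly $\mathfrak{g}$-equivariance. Naturality in $M$ and $N$ is immediate since the identification is the identity on underlying sets.

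For (ii), I would start from the well-known $k$-linear adjunction
\[ \Phi:\mathrm{Hom}_k(M\otimes_k N,K) \xrightarrow{\ \cong\ } \mathrm{Hom}_k(M,\mathrm{Hom}_k(N,K)), \quad \Phi(\varphi)(m)(n)=\varphi(m\otimes n), \]
and check that $\Phi$ is $\mathfrak{g}$-equivariant with respect to the $\mathfrak{g}$-module structures defined in the preceding paragraph of the paper. Concretely, one computes $\Phi(\alpha\varphi)(m)(n)$ by expanding $(\alpha\varphi)(m\otimes n) = \alpha\varphi(m\otimes n) - \varphi(\alpha m\otimes n) - \varphi(m\otimes \alpha n)$ using the diagonal action on $M\otimes_k N$, and compares it with $(\alpha\Phi(\varphi))(m)(n)$, which expands as $\alpha \Phi(\varphi)(m)(n) - \Phi(\varphi)(m)(\alpha n) - \Phi(\varphi)(\alpha m)(n)$ by applying the $\mathrm{Hom}_k$-action twice. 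The two expressions agree term by term, so $\Phi$ respects the $\mathfrak{g}$-actions; this is the only real verification, and it is mechanical.

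Finally, (iii) follows by applying the $\mathfrak{g}$-invariants functor $(-)^{\mathfrak{g}}$ to the $\mathfrak{g}$-equivariant isomorphism of (ii) and using (i) on each side:
\[ \mathrm{Hom}_{\mathfrak{g}}(M\otimes_k N,K) = \mathrm{Hom}_k(M\otimes_k N,K)^{\mathfrak{g}} \cong \mathrm{Hom}_k(M,\mathrm{Hom}_k(N,K))^{\mathfrak{g}} = \mathrm{Hom}_{\mathfrak{g}}(M,\mathrm{Hom}_k(N,K)). \]
The main (and only) obstacle is the sign-and-action bookkeeping in step (ii); once $\Phi$ is shown to intertwine the two $\mathfrak{g}$-actions, everything else is formal.
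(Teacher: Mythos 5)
Your proof is correct. The paper itself gives no proof of this lemma --- it is stated as a standard fact, with the reader referred to \cite{Knapp} and \cite{Weibel} --- so there is nothing to compare against; your argument (unwinding the invariants condition for part one, checking that the classical $k$-linear tensor-hom bijection $\Phi(\varphi)(m)(n)=\varphi(m\otimes n)$ intertwines the diagonal action on $M\otimes_k N$ with the twice-applied $\mathrm{Hom}_k$-action, and then passing to $\mathfrak{g}$-invariants) is exactly the standard verification, and the term-by-term cancellation you describe does go through.
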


Denote by $U(\mathfrak{g})$ the universal enveloping algebra of $\mathfrak{g}$. Note that the category of $\mathfrak{g}$-modules is naturally isomorphic to the category of $U(\mathfrak{g})$-modules, so we will identify them without mentioning. The cohomology of $\mathfrak{g}$ with coefficients in the $\mathfrak{g}$-module $M$ is defined as
$\mathrm{H}^{\ast}(\mathfrak{g},M)=\mathrm{Ext}^{\ast}_{U(\mathfrak{g})}(k,M)$. Hence, $\mathrm{H}^{\ast}(\mathfrak{g},M)$ can be computed by taking the cohomology of $\mathrm{Hom}_{\mathfrak{g}}(F,M)$, where $F$ is any free $U(\mathfrak{g})$-resolution of $k$.
For details on homological algebra and the cohomology of Lie algebras, we refer the reader to \cite{Knapp} and \cite{Weibel}.

\begin{lemma}\label{lem: gmod lemma2} Let $0 \rightarrow \mathfrak{n} \rightarrow \mathfrak{g} \xrightarrow{\pi} \mathfrak{h} \rightarrow 0$ be short exact sequence of Lie algebras. If $K,N$ are $\mathfrak{g}$-modules such that $\mathfrak{n}$ acts trivially on $K$, then there is a natural isomorphism
\[ \mathrm{Hom}_{\mathfrak{g}}(K,N) \cong \mathrm{Hom}_{\mathfrak{h}}(K,N^{\mathfrak{n}}). \]
In particular, we have a natural isomorphism of functors $-^{\mathfrak{g}} \cong -^{\mathfrak{h}}\circ -^{\mathfrak{n}}$,
where we consider $-^{\mathfrak{n}}$ as a functor from $\mathfrak{g}$-mod to $\mathfrak{h}$-mod.
\end{lemma}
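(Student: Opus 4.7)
The plan is to construct the natural isomorphism directly by restricting and corestricting along the obvious inclusions and projections, and then to deduce the functorial statement by specializing to $K=k$.

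First I would note that since $\mathfrak{n}$ acts trivially on $K$, the $\mathfrak{g}$-action on $K$ factors through the quotient $\pi:\mathfrak{g}\to\mathfrak{h}$, so $K$ carries a well-defined $\mathfrak{h}$-module structure satisfying $\pi(\alpha)\cdot k = \alpha\cdot k$ for all $\alpha\in\mathfrak{g}$, $k\in K$. Next, given $f\in\mathrm{Hom}_{\mathfrak{g}}(K,N)$, for any $n\in\mathfrak{n}$ and $k\in K$ one has $n\cdot f(k) = f(n\cdot k) = f(0) = 0$, so the image of $f$ lies in $N^{\mathfrak{n}}$. Thus we obtain a well-defined corestriction map
\[ \Phi: \mathrm{Hom}_{\mathfrak{g}}(K,N) \longrightarrow \mathrm{Hom}_{\mathfrak{h}}(K,N^{\mathfrak{n}}), \quad f\mapsto f, \]
and $\mathfrak{h}$-equivariance of $\Phi(f)$ follows from $\pi(\alpha)\cdot f(k) = \alpha\cdot f(k) = f(\alpha\cdot k) = f(\pi(\alpha)\cdot k)$ for $\alpha\in\mathfrak{g}$. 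Note also that $N^{\mathfrak{n}}$ is indeed an $\mathfrak{h}$-submodule of $N$ because $\mathfrak{n}$ is an ideal of $\mathfrak{g}$: for $\alpha\in\mathfrak{g}$, $n\in\mathfrak{n}$, $v\in N^{\mathfrak{n}}$, we have $n\cdot(\alpha\cdot v) = \alpha\cdot(n\cdot v) - [\alpha,n]\cdot v = 0$, and since $\mathfrak{n}$ acts trivially on $N^{\mathfrak{n}}$ the induced $\mathfrak{g}$-action descends to $\mathfrak{h}$.

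Conversely, given $g\in\mathrm{Hom}_{\mathfrak{h}}(K,N^{\mathfrak{n}})$, I would define $\Psi(g):K\to N$ by composing $g$ with the inclusion $N^{\mathfrak{n}}\hookrightarrow N$. To check $\mathfrak{g}$-equivariance, compute for $\alpha\in\mathfrak{g}$ and $k\in K$:
\[ \alpha\cdot\Psi(g)(k) = \alpha\cdot g(k) = \pi(\alpha)\cdot g(k) = g(\pi(\alpha)\cdot k) = g(\alpha\cdot k) = \Psi(g)(\alpha\cdot k), \]
where the second equality uses $g(k)\in N^{\mathfrak{n}}$ together with the fact that the $\mathfrak{g}$-action on $N^{\mathfrak{n}}$ factors through $\mathfrak{h}$. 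One then verifies $\Phi\circ\Psi = \mathrm{id}$ and $\Psi\circ\Phi = \mathrm{id}$ tautologically, and naturality in $K$ and $N$ is immediate from the construction.

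For the ``in particular'' part, I would take $K = k$ with the trivial action (on which $\mathfrak{n}$ acts trivially automatically). The isomorphism above then reads
\[ N^{\mathfrak{g}} = \mathrm{Hom}_{\mathfrak{g}}(k,N) \cong \mathrm{Hom}_{\mathfrak{h}}(k,N^{\mathfrak{n}}) = (N^{\mathfrak{n}})^{\mathfrak{h}}, \]
and naturality in $N$ yields the functorial identification $-^{\mathfrak{g}}\cong -^{\mathfrak{h}}\circ -^{\mathfrak{n}}$, with $-^{\mathfrak{n}}$ viewed as a functor $\mathfrak{g}\text{-mod}\to\mathfrak{h}\text{-mod}$ by the observation above.

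There is no real obstacle in this proof; it is essentially a formal exercise in unpacking the definitions of module actions through the short exact sequence. The only point requiring a moment of care is verifying that $N^{\mathfrak{n}}$ inherits an $\mathfrak{h}$-module structure from its $\mathfrak{g}$-module structure, which rests on $\mathfrak{n}$ being an ideal (not merely a subalgebra) of $\mathfrak{g}$.
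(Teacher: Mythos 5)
Your proof is correct; the paper states this lemma without proof, and your argument (corestricting a $\mathfrak{g}$-map to $N^{\mathfrak{n}}$, checking that $N^{\mathfrak{n}}$ is an $\mathfrak{h}$-module because $\mathfrak{n}$ is an ideal, and specializing to $K=k$ for the functorial statement) is precisely the standard verification the authors leave implicit. No gaps.
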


\section{Compatible Actions}
We are especially interested in  split short exact sequences of Lie algebras over a field $k$

\begin{equation}\label{eq: splitextension}\xymatrix {&0 \ar[r] &\mathfrak{n}\ar[r] &\mathfrak{g} \ar[r]^{\pi} &\mathfrak{h}
\ar@/_1.5pc/[l]\ar[r] &0}.
\end{equation} There is a Lie algebra homomorphism
$\varphi: \mathfrak{h} \rightarrow \mathrm{Der}(\mathfrak{n})$, where $\mathrm{Der}(\mathfrak{n})$ is the derivation algebra of $\mathfrak{n}$.
Using $\varphi$, we can write $\mathfrak{g}$ as a semi-direct product
$ \mathfrak{g}= \mathfrak{n} \rtimes_{\varphi} \mathfrak{h}$.
Viewed this way,  multiplication in $\mathfrak{g}$ is given by
\[ [(s,\alpha),(t,\beta)]=([s,t]+\varphi(\alpha)(t)-\varphi(\beta)(s),[\alpha,\beta]), \ \ \ \forall \alpha,\beta \in \mathfrak{h}, \ s, t \in \mathfrak{n}. \]
In what follows, we will drop $\varphi$ from our notation and write $\varphi(\alpha)(t)$ as $\alpha(t)$ for all $\alpha \in \mathfrak{h}$ and $t \in \mathfrak{n}$. Given a  $\mathfrak{g}$-module $M$,
we will construct a new resolution to compute $\mathrm{H}^{\ast}(\mathfrak{g},M)$. Our result will depend on the existence of what is called a \emph{compatible action}.
\begin{definition} \label{def: comp act} \rm Suppose $\varepsilon :F \rightarrow k$ is a free resolution of $k$ over $U(\mathfrak{n})$. Let $\mathfrak{C}(F)$ be the set of chain maps from $F$ to itself that extend the zero map on $k$. It is an associative $k$-algebra under composition and hence it can be given the standard Lie algebra structure. We say $\mathfrak{h}$ \emph{acts compatibly on} $F$, if there exists a Lie algebra homomorphism $\Theta: \mathfrak{h} \rightarrow \mathfrak{C}(F): \alpha \mapsto \underline{\alpha}$, such that
\begin{equation} \label{eq: comp cond}
\alpha(s)f=\underline{\alpha}(sf) - s\underline{\alpha}(f)
\end{equation}
\noindent for all $\alpha \in \mathfrak{h}$, $s \in \mathfrak{n}$ and $f \in F_{\ast}$.
\end{definition}
Given an $\mathfrak{h}$-module $M$, we can use the projection map $\pi: \mathfrak{g} \rightarrow \mathfrak{h}$ to turn $M$ into a $\mathfrak{g}$-module. Moreover, a $U(\mathfrak{h})$-resolution of $k$ inflates to a $U(\mathfrak{g})$-resolution of $k$. However, since the projection of $\mathfrak{g}$ onto $\mathfrak{n}$
is not a Lie algebra homomorphism, there is no obvious way of extending a $U(\mathfrak{n})$-resolution to a $U(\mathfrak{g})$-resolution. This is where compatible actions come into play.
\begin{lemma} \label{prop: gmod}
Suppose there is a compatible action of $\mathfrak{h}$ on a $U(\mathfrak{n})$-resolution $\varepsilon :F \rightarrow k$. Let $(s,\alpha) \in \mathfrak{g}$ for $s \in \mathfrak{n}$, $\alpha \in \mathfrak{h}$, and $f \in F_{\ast}$, then
\begin{equation}\label{eq: induced gmod structure} (s,\alpha)f = sf+ \underline{\alpha}(f) \end{equation}
turns $F\rightarrow k$ into a resolution of $U(\mathfrak{g})$-modules.
\end{lemma}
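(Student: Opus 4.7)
The plan is to verify three things: that formula (\ref{eq: induced gmod structure}) defines a $\mathfrak{g}$-module structure on each $F_n$; that the differentials and the augmentation of $F$ become $\mathfrak{g}$-equivariant; and that exactness is preserved.

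The heart of the proof is checking that the proposed action respects the Lie bracket of $\mathfrak{g} = \mathfrak{n} \rtimes_{\varphi} \mathfrak{h}$. For $(s,\alpha), (t,\beta) \in \mathfrak{g}$ and $f \in F_\ast$, I would compute
\[ [(s,\alpha),(t,\beta)]\, f \;=\; ([s,t]+\alpha(t)-\beta(s))\,f + \underline{[\alpha,\beta]}(f) \]
on one hand, and on the other hand expand $(s,\alpha)((t,\beta)f) - (t,\beta)((s,\alpha)f)$ by applying (\ref{eq: induced gmod structure}) twice. Associativity of the $U(\mathfrak{n})$-action gives $[s,t]f = s(tf) - t(sf)$; the fact that $\Theta$ is a Lie algebra homomorphism yields $\underline{\alpha}(\underline{\beta}(f)) - \underline{\beta}(\underline{\alpha}(f)) = \underline{[\alpha,\beta]}(f)$; and the compatibility condition (\ref{eq: comp cond}) rewrites $\alpha(t)f$ as $\underline{\alpha}(tf) - t\underline{\alpha}(f)$ and $\beta(s)f$ as $\underline{\beta}(sf) - s\underline{\beta}(f)$. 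Substituting and matching terms shows that the two expressions agree.

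Once the $\mathfrak{g}$-module structure is in place, the remaining points are immediate. The differential $d$ commutes with each $s \in \mathfrak{n}$ by $U(\mathfrak{n})$-linearity and with each $\underline{\alpha}$ because $\Theta$ lands in $\mathfrak{C}(F)$, so $d((s,\alpha)f) = sd(f) + \underline{\alpha}(d(f)) = (s,\alpha)d(f)$. Similarly, the augmentation $\varepsilon$ satisfies $\varepsilon(sf)=0$ since $k$ is a trivial $\mathfrak{n}$-module, and $\varepsilon(\underline{\alpha}(f)) = 0$ because $\underline{\alpha}$ extends the zero map on $k$; endowing $k$ with the trivial $\mathfrak{g}$-action then makes $\varepsilon$ equivariant. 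Exactness of the augmented complex is inherited from the original resolution, since the underlying complex of $k$-vector spaces is unchanged.

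The only nontrivial step is the bracket verification, and it is essentially a bookkeeping exercise in which the compatibility axiom (\ref{eq: comp cond}) is tailor-made to cancel exactly the cross terms produced by the semidirect-product Lie bracket on $\mathfrak{g}$; everything else follows at once from the hypotheses that $\underline{\alpha}$ is a chain map extending zero and that $F$ is already a $U(\mathfrak{n})$-resolution.
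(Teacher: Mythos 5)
Your proposal is correct and follows essentially the same route as the paper: verify the $\mathfrak{g}$-module structure, then use the chain-map property of $\underline{\alpha}$ for the differentials and the fact that $\underline{\alpha}$ extends the zero map for the augmentation. The only difference is that you carry out the bracket verification explicitly, whereas the paper compresses it into the phrase ``by definition of compatible action''; your expansion is accurate and the cancellations work exactly as you describe.
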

\begin{proof}For each $n$, denote by $F_n$ the $n^{\mathrm{th}}$-module of $F$. By definition of compatible action, the action in (\ref{eq: induced gmod structure}) turns $F_n$ into a $\mathfrak{g}$-module.

To see that the differentials of $F$ are $\mathfrak{g}$-module homomorphisms, we use the fact that $\underline{\alpha}$ is a chain map for each $\alpha \in \mathfrak{h}$. Let $f \in F_n$ and $(s,\alpha) \in \mathfrak{g}$, then
\[ d((s,\alpha)f)=d(sf+\underline{\alpha}(f))= sd(f)+ \underline{\alpha}(d(f)) = (s,\alpha)d(f).\]
Finally, the augmentation $\varepsilon: F_0 \rightarrow k$ becomes a $\mathfrak{g}$-module map (give $k$ trivial $\mathfrak{g}$-module structure) because $\underline{\alpha}$ extends the zero map on $k$ for each $\alpha \in \mathfrak{h}$. Let $f \in F_0$ and $(s,\alpha) \in \mathfrak{g}$. Then, we have
\[
\varepsilon((s,\alpha)f) =  s\varepsilon(f) + \varepsilon(\underline{\alpha}(f))= 0 = (s,\alpha)\varepsilon(f). \]
\end{proof}
\indent Next, we consider a free $U(\mathfrak{n})$-resolution $\varepsilon_F :F \rightarrow k$ and assume that it admits a compatible action of $\mathfrak{h}$. Using Lemma \ref{prop: gmod}, we inflate $\varepsilon_F: F \rightarrow k$ into a (not necessarily free) $U(\mathfrak{g})$-resolution of $k$. Also, we consider a free $U(\mathfrak{h})$-resolution $\varepsilon_P:P \rightarrow k$ of $k$ and turn it into a $U(\mathfrak{g})$-resolution of $k$, using the projection map $\pi$.
The complex $P\otimes_k F$ now turns out to be a free resolution of $U(\mathfrak{g})$-modules. To summarize, we have
\begin{lemma} \label{prop: freeres}The complex $\varepsilon_P \otimes \varepsilon_F: P\otimes_k F \rightarrow k$
 is a free $U(\mathfrak{g})$-resolution, with the action of $U(\mathfrak{g})$ on  $P\otimes_k F$  induced by
 $$(s,\alpha)(p\otimes f) := \alpha p\otimes f + p\otimes (sf+ \underline{\alpha}(f))$$
 for each $(s,\alpha)\in \mathfrak{g}$, $p\in P_*$, and $f\in F_*$.
\end{lemma}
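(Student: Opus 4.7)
My plan is to verify three properties of $P \otimes_k F$ with the prescribed action: that it is a complex of $U(\mathfrak{g})$-modules augmented to trivial $k$, that it is acyclic, and that each bidegree piece is $U(\mathfrak{g})$-free. The first and second are essentially formal; the freeness is the substantive step.

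The action is simply the standard tensor-product $\mathfrak{g}$-module structure arising from viewing $P$ as a $\mathfrak{g}$-module by inflation along $\pi$ (so $(s,\alpha)p = \alpha p$) and $F$ as a $\mathfrak{g}$-module via Lemma \ref{prop: gmod} (so $(s,\alpha)f = sf + \underline{\alpha}(f)$); the formula in the statement is then read off by expanding $(s,\alpha)(p\otimes f) = ((s,\alpha)p)\otimes f + p \otimes ((s,\alpha)f)$. Since $d_P$ is $\mathfrak{h}$-linear, hence $\mathfrak{g}$-linear after inflation, and $d_F$ is $\mathfrak{g}$-linear by Lemma \ref{prop: gmod}, both the horizontal differential $d_P \otimes \mathrm{id}$ and the vertical differential $\pm\,\mathrm{id} \otimes d_F$ are $\mathfrak{g}$-linear. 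Likewise $\varepsilon_P \otimes \varepsilon_F$ is a $\mathfrak{g}$-module map into $k \otimes_k k \cong k$ with trivial $\mathfrak{g}$-action. For acyclicity, since $k$ is a field every $k$-module is flat, and the K\"unneth formula gives $H_*(P \otimes_k F) \cong H_*(P) \otimes_k H_*(F) = k \otimes_k k$, concentrated in degree zero with the isomorphism to $k$ induced by $\varepsilon_P \otimes \varepsilon_F$.

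The substantive step is freeness, and my plan is to apply Yoneda. Since $\mathfrak{n}$ acts trivially on $P_p$ by inflation, combining Lemma \ref{lem: gmod lemma} with Lemma \ref{lem: gmod lemma2} yields, for every $\mathfrak{g}$-module $M$, a natural isomorphism
\[ \mathrm{Hom}_{\mathfrak{g}}(P_p \otimes_k F_q, M) \cong \mathrm{Hom}_{\mathfrak{h}}\bigl(P_p, \mathrm{Hom}_{\mathfrak{n}}(F_q, M)\bigr). \]
If $P_p$ is $U(\mathfrak{h})$-free on an index set $I$ and $F_q$ is $U(\mathfrak{n})$-free on an index set $J$, then $\mathrm{Hom}_{\mathfrak{n}}(F_q, M) \cong M^J$ naturally in $M$ and $\mathrm{Hom}_{\mathfrak{h}}(P_p, N) \cong N^I$ naturally in $N$, so the right-hand side is naturally isomorphic to $M^{I \times J}$. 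This is precisely the functor $\mathrm{Hom}_{\mathfrak{g}}(U(\mathfrak{g})^{(I \times J)}, -)$, so by Yoneda the representing $\mathfrak{g}$-modules are isomorphic, giving $P_p \otimes_k F_q \cong U(\mathfrak{g})^{(I \times J)}$. The main subtlety is bookkeeping the naturality at each adjoint step, but these are already packaged in the quoted lemmas; under the explicit Yoneda correspondence the isomorphism identifies the generators $e_i \otimes f_j$ with the free generators of $U(\mathfrak{g})^{(I \times J)}$, so $P_p \otimes_k F_q$ is genuinely free on $I \times J$. Combining (i)--(iii) yields the desired free $U(\mathfrak{g})$-resolution.
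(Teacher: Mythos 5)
Your proof is correct, and its first two steps coincide with the paper's: the action is indeed the tensor-product module structure coming from inflating $P$ along $\pi$ and from Lemma \ref{prop: gmod} on $F$, and acyclicity is exactly the paper's appeal to the K\"unneth formula over the field $k$. Where you genuinely diverge is the freeness step. The paper argues directly on modules: since $P_p$ is $U(\mathfrak{h})$-free, it reduces to showing $U(\mathfrak{h})\otimes_k F_q$ is $U(\mathfrak{g})$-free, identifies this $\mathfrak{g}$-module with the induced module $U(\mathfrak{g})\otimes_{U(\mathfrak{n})}F_q$ by the tensor identity, and then uses $F_q\cong\bigoplus_{i\in I}U(\mathfrak{n})$ to conclude $\cong\bigoplus_{i\in I}U(\mathfrak{g})$. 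You instead corepresent: via Lemmas \ref{lem: gmod lemma} and \ref{lem: gmod lemma2} you compute $\mathrm{Hom}_{\mathfrak{g}}(P_p\otimes_k F_q,-)\cong\mathrm{Hom}_{\mathfrak{h}}\bigl(P_p,\mathrm{Hom}_{\mathfrak{n}}(F_q,-)\bigr)\cong(-)^{I\times J}$ and invoke Yoneda. This is legitimate: $M\mapsto\mathrm{Hom}_{\mathfrak{n}}(F_q,M)$ really is a functor to $\mathfrak{h}$-modules by Lemma \ref{lem: gmod lemma2}, the isomorphisms you chain are natural, and evaluating the Yoneda correspondence at the identity does send the standard generators to the elements $e_i\otimes f_j$, so your added claim about generators checks out. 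In effect your Hom computation is the very one the paper performs later in the proof of Proposition \ref{prop: uberprop}, so your route front-loads that calculation and trades the paper's explicit tensor identity (which exhibits $P_p\otimes_k F_q$ concretely as an induced module, with the isomorphism visible on elements) for a more formal representability argument; what your version buys is that it makes transparent, already at this stage, why applying $\mathrm{Hom}_{\mathfrak{g}}(-,M)$ to $P\otimes_k F$ yields the double complex $\mathrm{Hom}_{\mathfrak{h}}(P,\mathrm{Hom}_{\mathfrak{n}}(F,M))$ used for the spectral sequence.
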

\begin{proof}
From the K\"{u}nneth formula for tensor products, it follows that $\varepsilon_P \otimes \varepsilon_F: P\otimes_k F \rightarrow k$ is a $U(\mathfrak{g})$-resolution of $k$.

The $n^{\mathrm{th}}$-module of $P\otimes_k F$ is given by
$ \bigoplus_{p+q=n}P_q \otimes_k F_q$, and we need to show that this is a free $U(\mathfrak{g})$-module. Because $P$ consists of free $U(\mathfrak{h})$-modules,  it suffices to show that
$ U(\mathfrak{h})\otimes_k F_q$
is a free $U(\mathfrak{g})$-module for every $q$. Furthermore, it follows from tensor identities that the $\mathfrak{g}$-modules
$U(\mathfrak{h})\otimes_k F_q $ and $U(\mathfrak{g}) \otimes_{U(\mathfrak{n})} F_q$ are isomorphic,
where the $\mathfrak{g}$-module structure on  $U(\mathfrak{g}) \otimes_{U(\mathfrak{n})} F$ is given by multiplication on the left in $U(\mathfrak{g})$. Hence, we see that
$$U(\mathfrak{h})\otimes_k F_q  \cong  U(\mathfrak{g}) \otimes_{U(\mathfrak{n})} F_q
 \cong  U(\mathfrak{g}) \otimes_{U(\mathfrak{n})} \Big(\oplus_{i\in I}U(\mathfrak{n})\Big) \cong  \oplus_{i \in I} U(\mathfrak{g}).$$\end{proof}
\section{Constructing compatible actions}
We will first show that compatible actions always exist for the Chevalley-Eilenberg complex $V(\mathfrak{n})$ of $\mathfrak{n}$:
\[ \ldots\rightarrow U(\mathfrak{n})\otimes_k \Lambda^{p}(\mathfrak{n}) \xrightarrow{d_p}\ldots \xrightarrow{d_2}  U(\mathfrak{n})\otimes_k \Lambda^{1}(\mathfrak{n})\xrightarrow{d_1}  U(\mathfrak{n})\otimes\Lambda^{0}(\mathfrak{n})\xrightarrow{\varepsilon} k \rightarrow 0.\]
where $\Lambda^{p}(\mathfrak{g})$ denotes the $p$-th exterior product of $\mathfrak{g}$, $\varepsilon$ is the usual augmentation map, and $d_1:U(\mathfrak{n})\otimes_k \mathfrak{n}\to  U(\mathfrak{n})$ is the product map $d_1(u\otimes x)=ux$. For $p\geq 2$, and $u\otimes x_1\wedge \ldots \wedge x_p \in V_p(\mathfrak{n})$, ($u\in U(\mathfrak{n})$, $x_i\in \mathfrak{n}$) the boundary map is given by
\begin{align*}\label{eq: diff}
 d_p(u\otimes x_1\wedge \ldots \wedge x_p) =&\sum_{i=1}^p (-1)^{i+1}ux_i  x_1 \wedge \ldots \wedge \hat{x_i} \wedge \ldots \wedge x_{p} +\\
& \sum_{i < j}(-1)^{i+j} u\otimes [x_i,x_j] \wedge x_1 \wedge \ldots \wedge \hat{x_i} \wedge \ldots \wedge \hat{x_j} \wedge \ldots \wedge x_{p}.\\
\end{align*}

\begin{proposition} \label{prop: comp act on chev}
Given the split extension (\ref{eq: splitextension}),  the maps
\begin{eqnarray}
\underline{\alpha}:  U(\mathfrak{n})\otimes_k \Lambda^{p}(\mathfrak{n}) &\rightarrow &U(\mathfrak{n})\otimes_k \Lambda^{p}(\mathfrak{n}):\nonumber \\
1 \otimes  x_{1}\wedge \ldots \wedge  x_{p} & \mapsto &\sum_{j=1}^p 1 \otimes x_{1}\wedge \ldots \wedge\alpha(x_{j})\wedge \ldots \wedge  x_{p} \nonumber, \\
 y_1 \ldots y_m \otimes x_{1}\wedge \ldots \wedge  x_{p} & \mapsto & \sum_{j=1}^m y_1\ldots \alpha(y_j)\ldots y_m \otimes x_{1}\wedge \ldots \wedge  x_{p}\nonumber,\\
& & + \sum_{j=1}^p y_1 \ldots y_m \otimes x_{1}\wedge \ldots \wedge\alpha(x_{j})\wedge \ldots \wedge  x_{p} \nonumber
\end{eqnarray}
for all $\alpha \in \mathfrak{h}$, define a compatible action of $\mathfrak{h}$ on the Chevalley-Eilenberg complex of $\mathfrak{n}$. (If $p=0$, then the second big sum disappears.)
\end{proposition}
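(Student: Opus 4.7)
The plan is to recognize each map $\underline{\alpha}$ as the natural lift of the derivation $\varphi(\alpha)\in\mathrm{Der}(\mathfrak{n})$ to the tensor product $V_p(\mathfrak{n})=U(\mathfrak{n})\otimes_k \Lambda^p(\mathfrak{n})$. Since $\varphi(\alpha)$ is a Lie derivation of $\mathfrak{n}$, it extends uniquely to a derivation $\widetilde{\alpha}$ of the associative algebra $U(\mathfrak{n})$ (the defining relations $xy-yx-[x,y]$ are preserved precisely because $\varphi(\alpha)$ respects brackets), and also to a derivation of the exterior algebra $\Lambda^{*}(\mathfrak{n})$. With these extensions, $\underline{\alpha}=\widetilde{\alpha}\otimes 1 + 1\otimes \widetilde{\alpha}$, which matches the formulas in the statement. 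This reformulation makes the required verifications conceptual rather than computational.

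That $\alpha\mapsto \underline{\alpha}$ is a Lie algebra homomorphism into $\mathfrak{C}(V(\mathfrak{n}))$ will follow from the fact that the commutator of two derivations of an associative (or exterior) algebra is again a derivation, combined with the observation that two derivations agreeing on the generating set $\mathfrak{n}$ must coincide. Since $\varphi$ is a Lie homomorphism, $\widetilde{[\alpha,\beta]}=[\widetilde{\alpha},\widetilde{\beta}]$ holds on $\mathfrak{n}$ and hence globally on both $U(\mathfrak{n})$ and $\Lambda^{*}(\mathfrak{n})$. The compatibility condition (\ref{eq: comp cond}) reduces to a one-line calculation: for $s\in\mathfrak{n}$ and $f=u\otimes \xi$ with $\xi\in\Lambda^p(\mathfrak{n})$, using $sf=su\otimes \xi$ and the derivation property of $\widetilde{\alpha}$ on $U(\mathfrak{n})$ one obtains
\[ \underline{\alpha}(sf)-s\underline{\alpha}(f)=\bigl(\widetilde{\alpha}(su)-s\widetilde{\alpha}(u)\bigr)\otimes \xi=\varphi(\alpha)(s)\cdot u\otimes \xi=\alpha(s)f. \]
That $\underline{\alpha}$ extends the zero map on $k$ is equally immediate, since $\widetilde{\alpha}$ maps the augmentation ideal of $U(\mathfrak{n})$ into itself and kills $1$, so $\varepsilon\circ \underline{\alpha}|_{V_0(\mathfrak{n})}=0$.

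The main obstacle, and the only genuinely laborious step, is the verification that each $\underline{\alpha}$ is a chain map, i.e.\ $d_p\circ \underline{\alpha}=\underline{\alpha}\circ d_p$ on $V_p(\mathfrak{n})$ for all $p$. I would split the Chevalley-Eilenberg boundary as $d_p=d_p^{\mathrm{m}}+d_p^{\mathrm{b}}$, where $d_p^{\mathrm{m}}$ collects the multiplication terms $(-1)^{i+1}ux_i\otimes x_1\wedge \cdots \wedge \hat{x_i}\wedge \cdots \wedge x_p$ and $d_p^{\mathrm{b}}$ collects the bracket terms, and verify the commutation on each piece separately. Commutation with $d_p^{\mathrm{m}}$ reduces to the derivation identity $\widetilde{\alpha}(ux_i)=\widetilde{\alpha}(u)x_i+u\,\varphi(\alpha)(x_i)$ in $U(\mathfrak{n})$; the two summands match cleanly against the two contributions of $\underline{\alpha}$ (one through the $U(\mathfrak{n})$-factor, one through the $\Lambda$-factor). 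Commutation with $d_p^{\mathrm{b}}$ relies on the Lie derivation property $\varphi(\alpha)([x_i,x_j])=[\varphi(\alpha)(x_i),x_j]+[x_i,\varphi(\alpha)(x_j)]$, which produces exactly the new bracket terms generated when $\underline{\alpha}$ acts on the $\Lambda$-factor. Although neither half is deep, keeping track of the signs $(-1)^{i+j}$ and ensuring that the cross terms between $d_p^{\mathrm{m}}$ and the $\widetilde{\alpha}$-image of the $\Lambda$-factor cancel against those between $d_p^{\mathrm{b}}$ and the $\widetilde{\alpha}$-image of $u$ is the tedious but essential part of the proof.
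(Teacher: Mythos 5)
Your argument is correct, but it is organized quite differently from the paper's. The paper proceeds entirely by direct computation on monomials: it expands $d\circ\underline{\alpha}(1\otimes x_1\wedge\ldots\wedge x_p)$ and $\underline{\alpha}\circ d(1\otimes x_1\wedge\ldots\wedge x_p)$ term by term, uses $\alpha([x_l,x_j])=[\alpha(x_l),x_j]+[x_l,\alpha(x_j)]$ to regroup the bracket terms, and checks the compatibility condition (\ref{eq: comp cond}) and the Lie-homomorphism property by similar explicit calculations. Your reformulation $\underline{\alpha}=\widetilde{\alpha}\otimes 1+1\otimes\widetilde{\alpha}$, with $\widetilde{\alpha}$ the canonical extension of $\varphi(\alpha)$ to a derivation of $U(\mathfrak{n})$ and of $\Lambda^{*}(\mathfrak{n})$, makes the Lie-homomorphism and compatibility checks conceptual one-liners, and it buys something the paper's formula quietly elides: since the monomials $y_1\cdots y_m$ are not linearly independent in $U(\mathfrak{n})$, the formula in the statement needs a well-definedness argument, which your derivation-extension viewpoint supplies automatically (the ideal generated by $xy-yx-[x,y]$ is preserved because $\varphi(\alpha)$ is a Lie derivation). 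The chain-map verification remains the same computation in both treatments; your decomposition $d_p=d_p^{\mathrm{m}}+d_p^{\mathrm{b}}$ is a sensible way to organize it, and in fact it works out more cleanly than you anticipate: each piece commutes with $\underline{\alpha}$ \emph{separately} (the matching of the term $u\,\alpha(x_i)\otimes\cdots$ against $(-1)^{j+1}u\,\alpha(x_j)\otimes\cdots$ happens entirely within the $d_p^{\mathrm{m}}$ piece, and the derivation identity for brackets closes the $d_p^{\mathrm{b}}$ piece on its own), so there is no cross-cancellation between the two pieces as your last sentence suggests. That misstatement of the bookkeeping is the only inaccuracy; it does not affect the validity of the method.
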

\begin{proof} Let us first show that for each $\alpha \in \mathfrak{h}$, $\underline{\alpha}$ is an augmentation preserving chain map. By simple computations, this reduces to showing that
$d\circ \underline{\alpha}(1\otimes x_{1}\wedge \ldots \wedge  x_{p})= \underline{\alpha} \circ d (1\otimes x_{1}\wedge \ldots \wedge  x_{p}),$
for all $p$.
First, we compute the left hand side (L). \begin{eqnarray*}
(\mathrm{L}) & = & \sum_{j=1}^p d(1 \otimes x_{1}\wedge \ldots \alpha(x_{j}) \ldots \wedge  x_{p})\\
& = & \sum_{j=1}^p (-1)^{j+1}\alpha(x_{j}) \otimes x_{1}\wedge \ldots \wedge \hat{x}_{j} \wedge \ldots \wedge  x_{p}  \\
& & +\sum_{\substack{l,j=1 \\ l \neq j}}^p (-1)^{l+1}x_{l}\otimes x_{1}\wedge \ldots \wedge \hat{x}_{l} \wedge \ldots \wedge \alpha(x_{j}) \wedge  \ldots \wedge  x_{p}\\
& & + \sum_{l>j}^p(-1)^{l+j}\otimes [\alpha(x_{j}),x_{l}]\wedge x_{1} \wedge \ldots \wedge \hat{x}_{j} \wedge \ldots \wedge \hat{x}_{l}\wedge \ldots \wedge x_{p} \\
& &  +  \sum_{j>l}^p(-1)^{l+j}\otimes [x_{l},\alpha(x_{j})]\wedge x_{1} \wedge \ldots \wedge \hat{x}_{l} \wedge \ldots \wedge \hat{x}_{j}\wedge \ldots \wedge x_{p} \\
& &  + \sum_{j=1}^p \sum_{\substack{l>k \\ l \neq j \neq k}}^p(-1)^{l+k}\otimes [x_{k},x_{l}]\wedge x_{1} \wedge \ldots \wedge \hat{x}_{k} \wedge \ldots \wedge \hat{x}_{l}\wedge \ldots \wedge \alpha(x_{j}) \wedge \ldots \wedge x_{p}.
\end{eqnarray*} Since $\alpha$ acts as a derivation, we have $\alpha([x_{l},x_{j}])=[\alpha(x_{l}),x_{j}]+[x_{l},\alpha(x_{j})]$. So, continuing with the equality, we find \begin{eqnarray*}
(\mathrm{L}) & = & \sum_{j=1}^p (-1)^{j+1}\alpha(x_{j}) \otimes x_{1}\wedge \ldots \wedge \hat{x}_{j} \wedge \ldots \wedge  x_{p}  \\
& & + \sum_{\substack{l,j=1 \\ l \neq j}}^p (-1)^{j+1}x_{j}\otimes x_{1}\wedge \ldots \wedge \hat{x}_{j} \wedge \ldots \wedge \alpha(x_{l}) \wedge  \ldots \wedge  x_{p}\\
&& + \sum_{j>l}^p(-1)^{l+j}1 \otimes \alpha([x_{l},x_{j}])\wedge x_{1} \wedge \ldots \wedge \hat{x}_{l} \wedge \ldots \wedge \hat{x}_{j}\wedge \ldots \wedge x_{p} \\
& & + \sum_{j=1}^p \sum_{\substack{l>k \\ l \neq j \neq k}}^p(-1)^{l+k}\otimes [x_{k},x_{l}]\wedge x_{1} \wedge \ldots \wedge \hat{x}_{k} \wedge \ldots \wedge \hat{x}_{l}\wedge \ldots \wedge \alpha(x_{j}) \wedge \ldots \wedge x_{p}. \\
\end{eqnarray*}
\noindent Meanwhile, the right hand side ($\mathrm{R}$) is
\begin{eqnarray*}
(\mathrm{R}) & = & \sum_{j=1}^p(-1)^{j+1}\underline{\alpha}(x_{j}\otimes  x_{1}\wedge \ldots \wedge \hat{x}_{j} \wedge \ldots \wedge  x_{p} ) \\
& & + \sum_{j>l}^p(-1)^{l+j}\underline{\alpha}(1\otimes [x_{l},x_{j}]\wedge x_{1} \wedge \ldots \wedge \hat{x}_{l} \wedge \ldots \wedge \hat{x}_{j}\wedge \ldots \wedge x_{p}) \\
& = & \sum_{j=1}^p (-1)^{j+1}\alpha(x_{j}) \otimes x_{1}\wedge \ldots \wedge \hat{x}_{j} \wedge \ldots \wedge  x_{p}  \\
& & + \sum_{\substack{l,j=1 \\ l \neq j}}^p (-1)^{j+1}x_{j}\otimes x_{1}\wedge \ldots \wedge \hat{x}_{j} \wedge \ldots \wedge \alpha(x_{l}) \wedge  \ldots \wedge  x_{p} \\
& & + \sum_{j<l}^p(-1)^{l+j}\underline{\alpha}(1\otimes [x_{l},x_{j}]\wedge x_{1} \wedge \ldots \wedge \hat{x}_{l} \wedge \ldots \wedge \hat{x}_{j}\wedge \ldots \wedge x_{p}). \\
\end{eqnarray*}
Now, using the definition of $\underline{\alpha}$, we see that this is the same expression as before.

Next, straightforward computations confirm that the  map $\Theta: \mathfrak{h} \rightarrow \mathfrak{C}(V(\mathfrak{n}))$ is a Lie algebra homomorphism.

It is left to check condition (\ref{eq: comp cond}).  Suppose $ y_1y_2\ldots y_m \in U(\mathfrak{n}) $, $x_{1}\wedge \ldots \wedge  x_{p} \in \Lambda^{p}(\mathfrak{n})$ and $x \in \mathfrak{n}$. Then, \begin{eqnarray*}
\underline{\alpha}(xy_1y_2\ldots y_m \otimes x_{1}\wedge \ldots \wedge  x_{p}) & = & \sum_{j=1}^m x y_1\ldots \alpha(y_j)\ldots y_m \otimes x_{1}\wedge \ldots \wedge  x_{p} \\
& & + \alpha(x)y_1y_2\ldots y_m \otimes x_{1}\wedge \ldots \wedge  x_{p} \\
& & +  xy_1y_2\ldots y_m \underline{\alpha}(1\otimes x_{1}\wedge \ldots \wedge  x_{p})\\
& = & \alpha(x)y_1\ldots y_m\otimes x_{1}\wedge \ldots \wedge  x_{p}+ \\
& & x\underline{\alpha}(y_1\ldots y_m\otimes x_{1}\wedge \ldots \wedge  x_{p}).
\end{eqnarray*} This shows that condition (\ref{eq: comp cond}) is satisfied.
Now, using the definition of $\underline{\alpha}$, we see that this is the same expression as before.
We conclude that the maps $\underline{\alpha}$ indeed define a compatible action of $\mathfrak{h}$ on the Chevalley-Eilenberg complex of $\mathfrak{n}$.
\end{proof}
Next, we give four simple but useful lemmas that allow us to construct new compatible actions from already existing ones. The proofs of the first three lemmas are straightforward.
\begin{lemma} \label{lemma: homomorphism} Let $\mathfrak{h}_1\to \mathrm{Der}(\mathfrak{n})$ be a Lie algebra homomorphism and  suppose $\mathfrak{h}_1$ acts compatibly on a free $U(\mathfrak{n})$-resolution $\varepsilon : F \rightarrow k$.  If $\phi: \mathfrak{h}_2 \rightarrow \mathfrak{h}_1$ is a Lie algebra homomorphism, then $\underline{\alpha}(f)= \underline{(\phi(\alpha))}(f)$ defines a compatible action of $\mathfrak{h}_2$ on $F$.
\end{lemma}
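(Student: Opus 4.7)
The plan is to verify the three ingredients required by Definition \ref{def: comp act}: that for each $\alpha \in \mathfrak{h}_2$ the map $\underline{\alpha}:= \underline{\phi(\alpha)}$ lies in $\mathfrak{C}(F)$, that $\alpha \mapsto \underline{\alpha}$ is a Lie algebra homomorphism $\mathfrak{h}_2 \to \mathfrak{C}(F)$, and that the compatibility equation (\ref{eq: comp cond}) holds. The key observation to keep in mind throughout is that the homomorphism $\mathfrak{h}_2 \to \mathrm{Der}(\mathfrak{n})$ giving the $\mathfrak{h}_2$-action on $\mathfrak{n}$ is the composite of $\phi$ with the given $\mathfrak{h}_1 \to \mathrm{Der}(\mathfrak{n})$, so the symbol $\alpha(s)$ for $\alpha \in \mathfrak{h}_2$ and $s \in \mathfrak{n}$ means $\phi(\alpha)(s)$.

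First I would note that membership of $\underline{\alpha}$ in $\mathfrak{C}(F)$ is automatic: since $\underline{\phi(\alpha)} \in \mathfrak{C}(F)$, it is a chain map from $F$ to $F$ extending the zero map on $k$, and these properties transfer verbatim to $\underline{\alpha}$. Next I would observe that $\Theta_2 : \mathfrak{h}_2 \to \mathfrak{C}(F)$ defined by $\Theta_2(\alpha) = \underline{\phi(\alpha)}$ is the composition $\Theta_1 \circ \phi$, where $\Theta_1 : \mathfrak{h}_1 \to \mathfrak{C}(F)$ is the given compatible action. Since $\Theta_1$ and $\phi$ are both Lie algebra homomorphisms, so is $\Theta_2$.

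Finally, I would verify (\ref{eq: comp cond}) by a one-line substitution. For $\alpha \in \mathfrak{h}_2$, $s \in \mathfrak{n}$ and $f \in F_\ast$,
\[
\alpha(s) f \;=\; \phi(\alpha)(s) f \;=\; \underline{\phi(\alpha)}(sf) - s\,\underline{\phi(\alpha)}(f) \;=\; \underline{\alpha}(sf) - s\,\underline{\alpha}(f),
\]
where the first equality is the definition of the $\mathfrak{h}_2$-action on $\mathfrak{n}$ and the middle equality is compatibility of $\Theta_1$ applied to the element $\phi(\alpha) \in \mathfrak{h}_1$.

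There is no real obstacle here; the lemma is essentially the statement that compatible actions pull back along Lie algebra homomorphisms, and each clause of the definition is preserved under such a pullback for formal reasons. The only point worth flagging is the need to make explicit that the $\mathfrak{h}_2$-action on $\mathfrak{n}$ used in condition (\ref{eq: comp cond}) is itself the pullback via $\phi$ of the $\mathfrak{h}_1$-action, without which the proof would not close.
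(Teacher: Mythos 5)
Your proof is correct and is precisely the straightforward verification the paper has in mind (the paper omits the proof, remarking only that it is straightforward): the composite $\Theta_1\circ\phi$ inherits all three defining properties, with the compatibility condition following because the $\mathfrak{h}_2$-action on $\mathfrak{n}$ is itself the pullback along $\phi$. Nothing is missing.
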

\begin{lemma} \label{lemma: kernel product} Suppose that, for $i=1,2$, we have a Lie algebra homomorphism $\varphi_i : \mathfrak{h} \rightarrow \mathrm{Der}(\mathfrak{n}_i)$ such that $\mathfrak{h}$ acts compatibly on a free $U(\mathfrak{n}_i)$-resolution $\varepsilon_i : F_i \rightarrow k$. Then, considering the homomorphism $\varphi: \mathfrak{h} \rightarrow \mathrm{Der}(\mathfrak{n}_1)\oplus \mathrm{Der}(\mathfrak{n}_2) \hookrightarrow \mathrm{Der}(\mathfrak{n}_1 \oplus \mathfrak{n}_2)$, we obtain a compatible action of $\mathfrak{h}$ on the free $U(\mathfrak{n}_1 \oplus \mathfrak{n}_2)$-resolution $\varepsilon_1 \otimes \varepsilon_2 : F_1 \otimes_k F_2 \rightarrow k$ given by $\underline{\alpha}(f_1 \otimes f_2):= \underline{\alpha}(f_1)\otimes f_2 + f_2 \otimes \underline{\alpha}(f_2)$.
\end{lemma}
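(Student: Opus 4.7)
The plan is to verify the three defining properties of a compatible action for the prescribed map $\underline{\alpha}(f_1\otimes f_2):= \underline{\alpha}(f_1)\otimes f_2 + f_1 \otimes \underline{\alpha}(f_2)$. Before doing so, I would note that $\varepsilon_1\otimes \varepsilon_2 : F_1\otimes_k F_2 \to k$ really is a free $U(\mathfrak{n}_1\oplus \mathfrak{n}_2)$-resolution: working over a field, the K\"unneth formula gives a resolution, and the Hopf-algebra isomorphism $U(\mathfrak{n}_1\oplus \mathfrak{n}_2)\cong U(\mathfrak{n}_1)\otimes_k U(\mathfrak{n}_2)$ identifies a tensor product of free $U(\mathfrak{n}_i)$-modules with a free $U(\mathfrak{n}_1\oplus \mathfrak{n}_2)$-module, the action being governed by the coproduct, i.e.\ $(s_1,s_2)(f_1\otimes f_2) = s_1f_1\otimes f_2+f_1\otimes s_2 f_2$.

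First I would check that $\underline{\alpha}\in \mathfrak{C}(F_1\otimes_k F_2)$, i.e.\ that it is a chain map extending the zero map on $k$. Using the total differential $d=d^h+d^v$ and the fact that each $\underline{\alpha}_i$ is degree preserving and already commutes with $d_i$, the two summands $\underline{\alpha}(f_1)\otimes f_2$ and $f_1\otimes \underline{\alpha}(f_2)$ each commute with $d$; the Koszul signs introduced by $d^v$ appear identically on both sides of the equality and cancel. The augmentation is killed because each $\underline{\alpha}_i$ kills the augmentation of $F_i$. Second, the assignment $\Theta: \mathfrak{h}\to \mathfrak{C}(F_1\otimes_k F_2): \alpha\mapsto \underline{\alpha}$ is a Lie algebra homomorphism: expanding $\underline{\alpha}\,\underline{\beta}(f_1\otimes f_2)$ and $\underline{\beta}\,\underline{\alpha}(f_1\otimes f_2)$ via the derivation-style formula produces cross terms that cancel pairwise, leaving $[\underline{\alpha},\underline{\beta}](f_1\otimes f_2) = [\underline{\alpha}_1,\underline{\beta}_1](f_1)\otimes f_2 + f_1\otimes [\underline{\alpha}_2,\underline{\beta}_2](f_2)$, which is $\underline{[\alpha,\beta]}(f_1\otimes f_2)$ by the bracket-preservation on each factor.

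The main step is the compatibility identity (\ref{eq: comp cond}) for $s=(s_1,s_2)\in \mathfrak{n}_1\oplus\mathfrak{n}_2$ and $f=f_1\otimes f_2$. I would expand
\[ \underline{\alpha}(sf) = \underline{\alpha}(s_1f_1)\otimes f_2 + s_1f_1\otimes \underline{\alpha}(f_2) + \underline{\alpha}(f_1)\otimes s_2 f_2 + f_1\otimes \underline{\alpha}(s_2 f_2) \]
and
\[ s\underline{\alpha}(f) = s_1\underline{\alpha}(f_1)\otimes f_2 + \underline{\alpha}(f_1)\otimes s_2 f_2 + s_1 f_1\otimes \underline{\alpha}(f_2) + f_1\otimes s_2\underline{\alpha}(f_2). \]
The two middle terms of each expression cancel between the two, and the compatibility hypotheses applied on each factor convert the remaining difference into $\alpha(s_1)f_1\otimes f_2 + f_1\otimes \alpha(s_2)f_2 = \alpha(s)(f_1\otimes f_2)$, as required. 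I do not foresee a real obstacle; the only point that requires genuine care is the sign bookkeeping in the chain-map verification, while the remaining checks collapse, via the derivation formula, into the corresponding checks on the individual factors $F_1$ and $F_2$.
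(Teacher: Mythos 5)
Your proof is correct and is exactly the direct verification the authors had in mind (the paper omits the proof, calling it straightforward): you check the chain-map/augmentation property, the Lie algebra homomorphism property of $\Theta$, and the compatibility identity, all of which reduce termwise to the corresponding properties on $F_1$ and $F_2$ after the cross terms cancel. You also correctly repair the typo in the statement, reading the formula as $\underline{\alpha}(f_1\otimes f_2)=\underline{\alpha}(f_1)\otimes f_2+f_1\otimes\underline{\alpha}(f_2)$.
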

\begin{lemma} \label{lemma: quotient semi-direct product}
Let $\mathfrak{h}=\mathfrak{h}_1 \rtimes_{\rho} \mathfrak{h}_2$ and let $\varphi : \mathfrak{h} \rightarrow \mathrm{Der}(\mathfrak{n})$ be a Lie algebra homomorphism. For each $i=1, 2$, suppose $\mathfrak{h}_i$ acts compatibly through $\varphi$ on the free $U(\mathfrak{n})$-resolution $\varepsilon : F \rightarrow k$. If  $\underline{\rho(\alpha_2)(\alpha_1)}= \underline{\alpha_2}\circ \underline{\alpha_1}-\underline{\alpha_1}\circ \underline{\alpha_2}$ for all $(\alpha_1,\alpha_2) \in \mathfrak{h}$, then $\underline{(\alpha_1,\alpha_2)}(f)=\underline{\alpha_1}(f)+\underline{\alpha_2}(f)$ defines a compatible action of $\mathfrak{h}$ on $F$.
\end{lemma}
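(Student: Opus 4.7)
The plan is to verify the three requirements of Definition \ref{def: comp act} for the map $\Theta:\mathfrak{h}\to\mathfrak{C}(F)$ sending $(\alpha_1,\alpha_2)$ to $\underline{\alpha_1}+\underline{\alpha_2}$: that it takes values in $\mathfrak{C}(F)$, that it is a Lie algebra homomorphism, and that it satisfies the compatibility condition (\ref{eq: comp cond}). The first point is immediate, because a sum of two chain maps extending the zero map on $k$ is again of that form, and $\Theta$ is $k$-linear since each of its two components is.

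For the Lie bracket property, I would expand both sides. Applying $\Theta$ to the semidirect product bracket
\[
[(\alpha_1,\alpha_2),(\beta_1,\beta_2)] = \bigl([\alpha_1,\beta_1]+\rho(\alpha_2)(\beta_1)-\rho(\beta_2)(\alpha_1),\ [\alpha_2,\beta_2]\bigr)
\]
gives, by $k$-linearity of the underline operation on $\mathfrak{h}_1$, the expression $\underline{[\alpha_1,\beta_1]} + \underline{\rho(\alpha_2)(\beta_1)} - \underline{\rho(\beta_2)(\alpha_1)} + \underline{[\alpha_2,\beta_2]}$. Since each $\mathfrak{h}_i$ already acts as a Lie algebra, the two bracket terms rewrite as $\underline{\alpha_i}\circ\underline{\beta_i}-\underline{\beta_i}\circ\underline{\alpha_i}$, while the standing hypothesis converts the two cross terms into $\underline{\alpha_2}\circ\underline{\beta_1}-\underline{\beta_1}\circ\underline{\alpha_2}$ and $\underline{\beta_2}\circ\underline{\alpha_1}-\underline{\alpha_1}\circ\underline{\beta_2}$. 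On the other hand, the commutator $[\underline{\alpha_1}+\underline{\alpha_2},\,\underline{\beta_1}+\underline{\beta_2}]$ in $\mathfrak{C}(F)$ expands into precisely these four pairs of terms, and the two sides match.

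The compatibility condition is then a short check using linearity of $\varphi$: for $s\in\mathfrak{n}$ and $f\in F_\ast$, one has $\varphi((\alpha_1,\alpha_2))(s)f = \varphi(\alpha_1)(s)f + \varphi(\alpha_2)(s)f$, and applying (\ref{eq: comp cond}) for each of $\mathfrak{h}_1$ and $\mathfrak{h}_2$ separately rewrites this sum as $(\underline{\alpha_1}(sf)-s\underline{\alpha_1}(f)) + (\underline{\alpha_2}(sf)-s\underline{\alpha_2}(f))$, which regroups to $\underline{(\alpha_1,\alpha_2)}(sf)-s\,\underline{(\alpha_1,\alpha_2)}(f)$. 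The only mildly delicate step is the bracket verification: the hypothesis $\underline{\rho(\alpha_2)(\alpha_1)}=\underline{\alpha_2}\circ\underline{\alpha_1}-\underline{\alpha_1}\circ\underline{\alpha_2}$ is introduced precisely so that the cross terms produced by the semidirect bracket become the mixed commutators that appear when one expands the commutator of $\underline{\alpha_1}+\underline{\alpha_2}$ with $\underline{\beta_1}+\underline{\beta_2}$, so the remaining work is just sign matching.
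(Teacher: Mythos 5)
Your proof is correct and is exactly the straightforward verification the paper omits (it states that the proofs of Lemmas \ref{lemma: homomorphism}--\ref{lemma: quotient semi-direct product} are straightforward): check that $\underline{\alpha_1}+\underline{\alpha_2}$ lies in $\mathfrak{C}(F)$, match the semidirect-product bracket against the commutator in $\mathfrak{C}(F)$ using the cross-term hypothesis, and add the two instances of condition (\ref{eq: comp cond}). No gaps; the bookkeeping with the bracket $[(\alpha_1,\alpha_2),(\beta_1,\beta_2)]=([\alpha_1,\beta_1]+\rho(\alpha_2)(\beta_1)-\rho(\beta_2)(\alpha_1),[\alpha_2,\beta_2])$ is handled correctly.
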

\begin{lemma} \label{ex: free product}
Suppose $\mathfrak{n}_1$ and $\mathfrak{n}_2$ are two Lie algebras and consider a Lie algebra homomorphism $\varphi : \mathfrak{h} \rightarrow \mathrm{Der}(\mathfrak{n}_1*\mathfrak{n}_2) $
such that for every $\alpha \in \mathfrak{h}$, we have $\varphi(\alpha)(\mathfrak{n}_i)\subset \mathfrak{n}_i$ for $i=1,2$.
Then
\begin{equation*}
\ldots \rightarrow U(\mathfrak{n}_1*\mathfrak{n}_2) \otimes (\Lambda^{p}(\mathfrak{n}_1)\oplus \Lambda^{p}(\mathfrak{n}_2)) \rightarrow \ldots  \rightarrow   U(\mathfrak{n}_1*\mathfrak{n}_2) \rightarrow k \rightarrow 0 .\end{equation*}
is a free $U(\mathfrak{n}_1*\mathfrak{n}_2)$-resolution of $k$ that allows a compatible action of $\mathfrak{h}$.
\end{lemma}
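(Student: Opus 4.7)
The plan is to exhibit the given complex as a splicing of tensor-induced Chevalley-Eilenberg resolutions of $\mathfrak{n}_1$ and $\mathfrak{n}_2$. The key algebraic input is that $U(\mathfrak{n}_1 * \mathfrak{n}_2)$ is isomorphic to the coproduct $U(\mathfrak{n}_1) * U(\mathfrak{n}_2)$ in the category of associative $k$-algebras, since $U$ is a left adjoint and therefore preserves coproducts. In particular, $U(\mathfrak{n}_1*\mathfrak{n}_2)$ is free, hence flat, as a right $U(\mathfrak{n}_i)$-module for each $i$. Applying the exact functor $U(\mathfrak{n}_1*\mathfrak{n}_2) \otimes_{U(\mathfrak{n}_i)} (-)$ to the Chevalley-Eilenberg resolution of $\mathfrak{n}_i$ and truncating at degree $0$ produces an exact sequence
\[ \ldots \to U(\mathfrak{n}_1*\mathfrak{n}_2) \otimes_k \Lambda^p(\mathfrak{n}_i) \to \ldots \to U(\mathfrak{n}_1*\mathfrak{n}_2) \otimes_k \mathfrak{n}_i \to U(\mathfrak{n}_1*\mathfrak{n}_2) \cdot I(\mathfrak{n}_i) \to 0, \]
where $I(\mathfrak{n}_i)$ is the augmentation ideal of $U(\mathfrak{n}_i)$ and the last map sends $u \otimes x$ to $ux$.

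Next I would establish a Mayer-Vietoris decomposition $I(\mathfrak{n}_1*\mathfrak{n}_2) = U(\mathfrak{n}_1*\mathfrak{n}_2) \cdot I(\mathfrak{n}_1) \oplus U(\mathfrak{n}_1*\mathfrak{n}_2) \cdot I(\mathfrak{n}_2)$ of left $U(\mathfrak{n}_1*\mathfrak{n}_2)$-modules. This follows from the reduced-word normal form in the coproduct $U(\mathfrak{n}_1) * U(\mathfrak{n}_2)$: its underlying vector space decomposes as $k \oplus \bigoplus_{n \geq 1}$ (alternating tensor products of $I(\mathfrak{n}_1)$ and $I(\mathfrak{n}_2)$), while $U(\mathfrak{n}_1*\mathfrak{n}_2) \cdot I(\mathfrak{n}_i)$ is exactly the span of reduced words ending in a factor from $I(\mathfrak{n}_i)$. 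Taking the direct sum of the two truncated resolutions constructed above and splicing via this identification produces a complex of the desired shape; exactness at degree $0$ is the short exact sequence $0 \to U(\mathfrak{n}_1*\mathfrak{n}_2)\cdot I(\mathfrak{n}_1) \oplus U(\mathfrak{n}_1*\mathfrak{n}_2)\cdot I(\mathfrak{n}_2) \to U(\mathfrak{n}_1*\mathfrak{n}_2) \to k \to 0$, and exactness in higher degrees is inherited from the two separate tensor-induced resolutions. Freeness of each term is automatic, since $U(\mathfrak{n}_1*\mathfrak{n}_2) \otimes_k V$ is a free $U(\mathfrak{n}_1*\mathfrak{n}_2)$-module for any $k$-vector space $V$.

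For the compatible action, the hypothesis $\varphi(\alpha)(\mathfrak{n}_i) \subset \mathfrak{n}_i$ guarantees that the subspace $U(\mathfrak{n}_1*\mathfrak{n}_2) \otimes_k (\Lambda^p(\mathfrak{n}_1) \oplus \Lambda^p(\mathfrak{n}_2))$ is invariant under the operators $\underline{\alpha}$ constructed in Proposition \ref{prop: comp act on chev} on the full Chevalley-Eilenberg complex of $\mathfrak{n}_1*\mathfrak{n}_2$; the same hypothesis also implies that the Chevalley-Eilenberg differential restricts to our subcomplex, since brackets of elements of $\mathfrak{n}_i$ remain in $\mathfrak{n}_i$. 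Consequently, the restriction of $\underline{\alpha}$ remains a chain map, $\alpha \mapsto \underline{\alpha}$ remains a Lie algebra homomorphism into $\mathfrak{C}(F)$, and compatibility condition (\ref{eq: comp cond}) is inherited from its verification in Proposition \ref{prop: comp act on chev}. The main obstacle is the Mayer-Vietoris step: one must use the normal form for the coproduct $U(\mathfrak{n}_1) * U(\mathfrak{n}_2)$ carefully enough to confirm both that $U(\mathfrak{n}_1*\mathfrak{n}_2)\cdot I(\mathfrak{n}_1)$ and $U(\mathfrak{n}_1*\mathfrak{n}_2)\cdot I(\mathfrak{n}_2)$ add up to $I(\mathfrak{n}_1*\mathfrak{n}_2)$ and that their intersection is zero. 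Once this combinatorial decomposition is in hand, everything else is a formal consequence either of the flatness of $U(\mathfrak{n}_1*\mathfrak{n}_2)$ over $U(\mathfrak{n}_i)$ or of the restriction of arguments already performed on the full Chevalley-Eilenberg complex.
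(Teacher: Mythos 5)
Your proposal is correct and follows essentially the same route as the paper: induce the truncated Chevalley--Eilenberg resolutions of $\mathfrak{n}_1$ and $\mathfrak{n}_2$ up to $U(\mathfrak{n}_1*\mathfrak{n}_2)$ using its freeness over each $U(\mathfrak{n}_i)$, splice their direct sum onto $0\to J\to U(\mathfrak{n}_1*\mathfrak{n}_2)\to k\to 0$ via the decomposition of the augmentation ideal $J$ as $\big(U(\mathfrak{n}_1*\mathfrak{n}_2)\otimes_{U(\mathfrak{n}_1)}J_1\big)\oplus\big(U(\mathfrak{n}_1*\mathfrak{n}_2)\otimes_{U(\mathfrak{n}_2)}J_2\big)$, and obtain the compatible action by restricting the formulas of Proposition \ref{prop: comp act on chev} to each summand, which the hypothesis $\varphi(\alpha)(\mathfrak{n}_i)\subseteq\mathfrak{n}_i$ makes invariant. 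The only difference is that you also supply a proof (via the reduced-word normal form in the coproduct $U(\mathfrak{n}_1)*U(\mathfrak{n}_2)$) of the augmentation-ideal decomposition, which the paper asserts without argument; that step is sound.
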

\begin{proof}
 Given two Lie algebras $\mathfrak{n}_1$ and $\mathfrak{n}_2$, we can consider their free product $\mathfrak{n}_1 * \mathfrak{n}_2$. If $J_i$ is the augmentation ideal of $\mathfrak{n}_i$ and $J$ is the augmentation ideal of $\mathfrak{n}_1 * \mathfrak{n}_2$, then $$J \cong  \Big(U(\mathfrak{n}_1*\mathfrak{n}_2)\otimes_{U(\mathfrak{n}_1)} J_1 \Big) \oplus \Big(U(\mathfrak{n}_1*\mathfrak{n}_2)\otimes_{U(\mathfrak{n}_2)} J_2 \Big)$$ as left $U(\mathfrak{n}_1*\mathfrak{n}_2)$-modules. Denote by $V_{\ast}(\mathfrak{n}_i)$ the Chevalley-Eilenberg resolution of $\mathfrak{n}_i$, for $i=1,2$. Then
\begin{equation} \label{eq: free product1}
\ldots \rightarrow V_{p}(\mathfrak{n}_i) \rightarrow V_{p-1}(\mathfrak{n}_i) \rightarrow \ldots \ldots V_1(\mathfrak{n}_i) \rightarrow J_i \rightarrow 0 \end{equation}
is a free $U(\mathfrak{n}_i)$-resolution of $J_i$, for $i=1,2$. Since $U(\mathfrak{n}_1 * \mathfrak{n}_2)$ is a free $U(\mathfrak{n}_i)$-module, applying $U(\mathfrak{n}_1 * \mathfrak{n}_2) \otimes_{U(\mathfrak{n}_i)}-$ to (\ref{eq: free product1}) yields an exact $U(\mathfrak{n}_1 * \mathfrak{n}_2)$-complex
\begin{equation} \label{eq: free product2}
\ldots \rightarrow U(\mathfrak{n}_1*\mathfrak{n}_2) \otimes \Lambda^{p}(\mathfrak{n}_i) \rightarrow U(\mathfrak{n}_1*\mathfrak{n}_2)\otimes \Lambda^{p-1}(\mathfrak{n}_i) \rightarrow \ldots  \rightarrow U(\mathfrak{n}_1 * \mathfrak{n}_2) \otimes_{U(\mathfrak{n}_i)} J_i \rightarrow 0 \end{equation}
for $i=1,2$. If we now take the direct sum of (\ref{eq: free product2}) for $i=1,2$, we obtain a free $U(\mathfrak{n}_1 * \mathfrak{n}_2)$-resolution of $J$ that we can extend to a free $U(\mathfrak{n}_1 * \mathfrak{n}_2)$-resolution of $k$, i.e.
\begin{equation} \label{eq: free product3}
\ldots \rightarrow U(\mathfrak{n}_1*\mathfrak{n}_2) \otimes (\Lambda^{p}(\mathfrak{n}_1)\oplus \Lambda^{p}(\mathfrak{n}_2)) \rightarrow \ldots  \rightarrow   U(\mathfrak{n}_1*\mathfrak{n}_2) \rightarrow k \rightarrow 0 .\end{equation}
Now suppose we have a Lie algebra homomorphism $\varphi: \mathfrak{h} \rightarrow \mathrm{Der}(\mathfrak{n}_1 * \mathfrak{n}_2)$ such that for every $\alpha \in \mathfrak{h}$, we have $\varphi(\alpha)(\mathfrak{n}_i)\subset \mathfrak{n}_i$ for $i=1,2$. Then $\mathfrak{h}$ acts compatibly on (\ref{eq: free product3}). Indeed, the compatible action on the $n$-th module of (\ref{eq: free product3}) can be defined on each of the two direct summands $U(\mathfrak{n}_1*\mathfrak{n}_2) \otimes \Lambda^{p}(\mathfrak{n}_i)$  as in Proposition \ref{prop: comp act on chev}.
\end{proof}
\begin{remark} \rm
Note that this lemma can easily be generalized to free products of more that two factors. Also, if one of the factors, say $\mathfrak{n}_2$, is free then we can replace the Chevalley-Eilenberg complex of $\mathfrak{n}_2$ by a resolution of the form (\ref{eq: resolution for free}).
\end{remark}
In the next examples, we will apply these lemmas to construct several useful compatible actions.
\begin{example}\rm \label{ex: free lie algebra}
Let $\mathfrak{f}_m$ be the free Lie algebra on $m$ generators $\{x_1, \ldots, x_m \}=X$. Then $k\{X\}$, the free $k$-algebra on $X$, is the universal enveloping algebra of $\mathfrak{f}_m$. Since the augmentation ideal $\mathrm{J}$ of $k\{X\}$ can be seen as an $m$-dimensional free $k\{X\}$-module,
\begin{equation} \label{eq: resolution for free} 0 \rightarrow \mathrm{J} \rightarrow k\{X\} \rightarrow k \rightarrow 0 \end{equation}
is a free $U(\mathfrak{f_m})$-resolution of $k$. Now, consider the universal split extension $\mathfrak{f}_m \rtimes \mathrm{Der}(\mathfrak{f_m})$ and take $\alpha \in \mathrm{Der}(\mathfrak{f_m})$. Then one can easily check that the $k$-linear map
\[ \underline{\alpha}: k\{X\} \rightarrow k\{X\} : \left\{\begin{array}{ccc}
                                                    r \in k &  \mapsto  & 0 \\
                                                    x_{i_1}x_{i_2}\ldots x_{i_p} & \mapsto & \sum_{j=1}^p  x_{i_1}\ldots \alpha(x_{i_j})\ldots x_{i_p}
                                                  \end{array}\right. \]
induces a compatible action of $\mathrm{Der}(\mathfrak{f_m})$ on (\ref{eq: resolution for free}). Hence, by Lemma \ref{lemma: homomorphism}, every split extension $\mathfrak{f}_n \rtimes \mathfrak{h}$ allows a compatible action of $\mathfrak{h}$ on (\ref{eq: resolution for free}).
\end{example}
\begin{example}\rm \label{ex: product of free lie algebras}
Let $\mathfrak{f}_1$ be the free Lie algebra on $m$ generators $\{x_1, \ldots, x_m \}=X$ and take $\mathfrak{f}_2$ to be the free Lie algebra on $n$ generators $\{y_1, \ldots, y_n \}=Y$. Now consider $(\mathfrak{f}_1 \oplus \mathfrak{f}_2)  \rtimes \mathrm{Der}(\mathfrak{f}_1 \oplus \mathfrak{f}_2)$. Since $\mathrm{Der}(\mathfrak{f}_m \oplus \mathfrak{f}_n) = \mathrm{Der}(\mathfrak{f}_m)\oplus \mathrm{Der}(\mathfrak{f}_n)$, we can use the previous example and Lemma $\ref{lemma: kernel product}$ to obtain a compatible action of $\mathrm{Der}(\mathfrak{f}_m \oplus \mathfrak{f}_n)$ on a free $U(\mathfrak{f}_1 \oplus \mathfrak{f}_2)$-resolution of $k$. Using Lemma \ref{lemma: homomorphism}, we see that any split extension with kernel $\mathfrak{f}_1 \oplus \mathfrak{f}_2$ admits a compatible action of this form.
\end{example}
\begin{example} \rm \label{ex: product free and abelian}
Let $\mathfrak{f}$ be the free Lie algebra on $m$ generators $\{x_1, \ldots, x_m \}=X$ and let $k^n$ be the $n$-dimensional abelian Lie algebra with $k$-basis $\{t_1,\ldots,t_n\}$. Recall that the universal enveloping algebra of $k^n$ equals the polynomial ring in $n$ variables $k[t_1,\ldots,t_n]$ and that we have a split short exact sequence
\[ 0 \rightarrow \mathrm{Der}(\mathfrak{f},k^n) \rightarrow \mathrm{Der}(\mathfrak{f} \oplus k^n) \rightarrow \mathrm{Der}(\mathfrak{f})\oplus \mathrm{Der}(k^n) \rightarrow 0. \]
Here, $\mathrm{Der}(\mathfrak{f},k^n)$ denotes the abelian Lie algebra of all $k$-linear maps from $\mathfrak{f}$ to $k^n$ that map $[\mathfrak{f},\mathfrak{f}]$ to zero.

Now, denote by $F_1$ the free $U(\mathfrak{f})$-resolution (\ref{eq: resolution for free}) and by $F_2$ the Chevalley-Eilenberg resolution of $k^n$. By Example \ref{ex: free lie algebra}, we have a compatible action of $\mathrm{Der(\mathfrak{f})}$ on $F_1$ and by Proposition \ref{prop: comp act on chev} we have a compatible action of $\mathrm{Der}(k^n)$ on $F_2$. Hence, it follows from Lemma \ref{lemma: kernel product} that we have a compatible action of $\mathrm{Der}(\mathfrak{f})\oplus \mathrm{Der}(k^n)$ on $F=F_1 \otimes F_2$. Next we will construct a compatible action of $\mathrm{Der}(\mathfrak{f},k^n)$ on $F$. To simplify our notation, we will first rewrite the complex $F$. Define \begin{eqnarray*}
& A_0 &=  k,\\
& A_p  &=    \Lambda^p(k^n)\oplus (\Lambda^{p-1}(k^n)\otimes\langle x_1,\ldots,x_m\rangle) \ \ \ \  \mbox{for} \ \ 1 \leq p \leq n,\\
& A_{n+1} &=  \Lambda^n(k^n)\otimes\langle x_1,\ldots , x_m\rangle,
\end{eqnarray*} where $\langle x_1,\ldots,x_m\rangle$ is the $m$-dimensional vector space with basis $X$. Then one can check that $F_p = U(\mathfrak{f} \oplus k^n)\otimes A_p$ for all $p \in \{0,\ldots,n+1\}$, with differentials given by \begin{eqnarray*}
d_1 : F_1 &\rightarrow& F_{0}: w \otimes (t_i,  x_k)  \mapsto  w(x_k,t_i)  \\
d_p : F_p &\rightarrow& F_{p-1}: w \otimes (t_{i_1}\wedge  \ldots \wedge t_{i_p},t_{j_1}\wedge  \ldots \wedge t_{j_{p-1}}\otimes x_k)  \mapsto \\ && \sum_{r=1}^p (-1)^{r+1}w(0,t_{i_r})\otimes(t_{i_1}\wedge  \ldots \wedge \widehat{t_{i_r}} \wedge \ldots \wedge t_{i_p},0)  \\ & &  + \sum_{s=1}^{p-1} (-1)^{s}w(0,t_{j_s})\otimes(0,t_{j_1}\wedge  \ldots \wedge \widehat{t_{j_s}} \wedge \ldots \wedge t_{j_{p-1}}\otimes x_k)  \\  & &  + w(x_k,0)\otimes (t_{j_1}\wedge  \ldots \wedge t_{j_{p-1}},0) \end{eqnarray*}
for all $p \in \{2,\ldots,n\}$ and \begin{eqnarray*}d_{n+1} : F_{n+1} &\rightarrow&  F_{n}: w \otimes (t_{1}\wedge  \ldots \wedge t_{n}\otimes x_k)  \mapsto  \\ &&
 \sum_{j=1}^n (-1)^{j}w(0,t_{j})\otimes(0,t_{1}\wedge  \ldots \wedge \widehat{t_{j}} \wedge \ldots \wedge t_{n}\otimes x_k)  \\  & &  + w(x_k,0)\otimes (t_{1}\wedge  \ldots \wedge t_{n},0).\end{eqnarray*}
 
Now, take $\alpha \in \mathrm{Der}(\mathfrak{f},k^n)\subseteq \mathrm{Der}(\mathfrak{f}\oplus k^n)$ and $a_i \in U(\mathfrak{f} \oplus k^n)$, then one can check that the maps
\begin{eqnarray*}
\underline{\alpha}: F_{0} &\rightarrow & F_{0} : a_1a_2 \ldots a_r   \mapsto  \sum_{s=1}^r a_1\ldots \alpha(a_s)\ldots a_r \\
\underline{\alpha}: F_p  &\rightarrow &   F_p : a_1a_2 \ldots a_r \otimes (t_{i_1}\wedge  \ldots \wedge t_{i_p},t_{j_1}\wedge  \ldots \wedge t_{j_{p-1}}\otimes x_k)  \mapsto \\
&&\sum_{s=1}^r a_1\ldots \alpha(a_s)\ldots a_r \otimes (t_{i_1}\wedge  \ldots \wedge t_{i_p},t_{j_1}\wedge  \ldots \wedge t_{j_{p-1}}\otimes x_k) \\
&& + (-1)^{p-1}a_1\ldots a_r \otimes(t_{j_1}\wedge  \ldots \wedge t_{j_{p-1}}\wedge \alpha(x_k),0) \\
\underline{\alpha}: F_{n+1} &\rightarrow & F_{n+1} : a_1a_2 \ldots a_r \otimes (t_{1}\wedge  \ldots \wedge t_{n}\otimes x_k)  \mapsto  \\
&& \sum_{s=1}^r a_1\ldots \alpha(a_s)\ldots a_r \otimes (t_{1}\wedge  \ldots \wedge t_{n}\otimes x_k)
\end{eqnarray*} define a compatible action of $\mathrm{Der}(\mathfrak{f},k^n)$ on $F$. Furthermore, one can verify that
$$\underline{\varphi_2\circ \alpha} - \underline{\alpha \circ \varphi_1} =  \underline{(\varphi_1, \varphi_2)}\circ \underline{\alpha} - \underline{\alpha} \circ \underline{(\varphi_1, \varphi_2)}$$ for all $(\varphi_1,\varphi_2) \in \mathrm{Der}(\mathfrak{f})\oplus \mathrm{Der}(k^n)$ and for all $\alpha \in \mathrm{Der}(\mathfrak{f},k^n)$
which means that the action of $\mathrm{Der}(\mathfrak{f},k^n)$ is compatible with the action of $\mathrm{Der}(\mathfrak{f})\oplus \mathrm{Der}(k^n)$ in the sense of Lemma \ref{lemma: quotient semi-direct product}. It now follows from Lemma \ref{lemma: quotient semi-direct product} that we have a compatible action of $\mathrm{Der}(\mathfrak{f}\oplus k^n)$ on $F$. Finally, Lemma \ref{lemma: homomorphism} implies that every split extension with kernel $\mathfrak{f} \oplus k^n$ admits a compatible action of this form.
\end{example}

\section{The Hochschild-Serre spectral sequence of a split extension}
Recall that a short exact sequence of Lie algebras
\begin{equation} 0 \rightarrow \mathfrak{n} \rightarrow \mathfrak{g} \xrightarrow{\pi} \mathfrak{h} \rightarrow 0 \label{eq: extension} \end{equation}
and a $\mathfrak{g}$-module $M$ give rise to a Hochschild-Serre spectral sequence. For a general treatment of spectral sequences we refer the reader to \cite{McClearly} and \cite{Weibel}. The Hochschild-Serre spectral sequence for Lie algebra extensions is discussed in \cite{Barnes2} and \cite{HochSerre}.

When the extension (\ref{eq: extension}) splits, we propose a modification to the construction of the Hochschild-Serre spectral sequence.
\begin{proposition} \label{prop: uberprop} Let
$0 \rightarrow \mathfrak{n} \rightarrow \mathfrak{g} \rightarrow \mathfrak{h} \rightarrow 0$
be a split extension of Lie algebras  and let $M$ be a $\mathfrak{g}$-module. If $\varepsilon_P:P \rightarrow k$ is a free $U(\mathfrak{h})$-resolution and $\varepsilon_F:F \rightarrow k$ is a free $U(\mathfrak{n})$-resolution that allows a compatible action of $\mathfrak{h}$, then this action defines a $\mathfrak{g}$-module structure on $F$ such that,
\[ \mathrm{H}^n(\mathfrak{g},M)=\mathrm{H}^n\Big(\mathrm{Hom}_{\mathfrak{h}}(P,\mathrm{Hom}_{\mathfrak{n}}(F,M))\Big) \]
for each $n$.
\end{proposition}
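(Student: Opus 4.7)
The natural approach is to combine Lemma \ref{prop: freeres} with the adjunction and invariance properties collected in Lemmas \ref{lem: gmod lemma} and \ref{lem: gmod lemma2}. By Lemma \ref{prop: freeres}, the compatible action of $\mathfrak{h}$ on $F$ makes $\varepsilon_P\otimes\varepsilon_F : P\otimes_k F\to k$ into a free $U(\mathfrak{g})$-resolution of $k$. Consequently,
\[ \mathrm{H}^n(\mathfrak{g},M) \;=\; \mathrm{Ext}^n_{U(\mathfrak{g})}(k,M) \;\cong\; \mathrm{H}^n\bigl(\mathrm{Hom}_{\mathfrak{g}}(P\otimes_k F, M)\bigr), \]
so the task reduces to producing an isomorphism of cochain complexes
\[ \mathrm{Hom}_{\mathfrak{g}}(P\otimes_k F, M) \;\cong\; \mathrm{Hom}_{\mathfrak{h}}\bigl(P,\mathrm{Hom}_{\mathfrak{n}}(F,M)\bigr). \]

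To produce this isomorphism, I would chain together three natural isomorphisms. First, the tensor-Hom adjunction from Lemma \ref{lem: gmod lemma} gives, in each bidegree $(p,q)$,
\[ \mathrm{Hom}_{\mathfrak{g}}(P_p\otimes_k F_q, M) \;\cong\; \mathrm{Hom}_{\mathfrak{g}}\bigl(P_p, \mathrm{Hom}_k(F_q,M)\bigr). \]
Second, since $P$ is the inflation along $\pi:\mathfrak{g}\to\mathfrak{h}$ of a $U(\mathfrak{h})$-resolution, the action of $\mathfrak{n}$ on each $P_p$ is trivial, so Lemma \ref{lem: gmod lemma2} applies with $K=P_p$ and $N=\mathrm{Hom}_k(F_q,M)$:
\[ \mathrm{Hom}_{\mathfrak{g}}\bigl(P_p,\mathrm{Hom}_k(F_q,M)\bigr) \;\cong\; \mathrm{Hom}_{\mathfrak{h}}\bigl(P_p,\mathrm{Hom}_k(F_q,M)^{\mathfrak{n}}\bigr). \]
Third, the first part of Lemma \ref{lem: gmod lemma}, applied to the $\mathfrak{n}$-modules $F_q$ and $M$, identifies $\mathrm{Hom}_k(F_q,M)^{\mathfrak{n}}$ with $\mathrm{Hom}_{\mathfrak{n}}(F_q,M)$. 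Composing these three natural isomorphisms yields a bidegree-preserving identification
\[ \mathrm{Hom}_{\mathfrak{g}}(P_p\otimes_k F_q, M) \;\cong\; \mathrm{Hom}_{\mathfrak{h}}\bigl(P_p,\mathrm{Hom}_{\mathfrak{n}}(F_q,M)\bigr). \]

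The main obstacle is checking that this bidegree-preserving identification promotes to an isomorphism of the associated total cochain complexes, i.e.\ that it commutes with the horizontal and vertical differentials (including the $(-1)^p$-sign twists introduced in the double-complex conventions fixed in Section 2). For the horizontal direction one unwinds the adjunction of Lemma \ref{lem: gmod lemma} on the chain map $d^h_{p,q}=d^P_p\otimes\mathrm{id}$; for the vertical direction one uses that the $(-1)^p$ sign appearing in $d^v$ on $P\otimes_k F$ is exactly the same sign that appears in $d_v$ on the Hom double complex $\mathrm{Hom}_{\mathfrak{h}}(P,\mathrm{Hom}_{\mathfrak{n}}(F,M))$ (both coming from the Koszul sign rule on tensoring a chain complex of degree $p$ with a map). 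Once this compatibility is verified bidegree-by-bidegree, the totalizations are isomorphic as cochain complexes, and passing to cohomology yields the claimed identification of $\mathrm{H}^n(\mathfrak{g},M)$. I expect this sign verification to be the only mildly delicate point; everything else is formal manipulation of the adjunctions already recorded.
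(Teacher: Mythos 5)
Your argument is correct and follows essentially the same route as the paper's proof: reduce to the free $U(\mathfrak{g})$-resolution $P\otimes_k F$ via Lemma \ref{prop: freeres}, then identify $\mathrm{Hom}_{\mathfrak{g}}(P\otimes_k F,M)$ with $\mathrm{Hom}_{\mathfrak{h}}(P,\mathrm{Hom}_{\mathfrak{n}}(F,M))$ by chaining the adjunction of Lemma \ref{lem: gmod lemma} with the invariants isomorphism of Lemma \ref{lem: gmod lemma2}. Your explicit attention to the compatibility of the identification with the differentials and the $(-1)^p$ sign conventions is a point the paper leaves implicit, but it is the same proof.
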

\begin{proof}
According to Lemma \ref{prop: freeres}, $\varepsilon_P \otimes \varepsilon_F: P\otimes_k F \rightarrow k$ is a free $U(\mathfrak{g})$-resolution. Therefore, $$\mathrm{H}^{\ast}(\mathfrak{g},M)= H^{\ast}(\mathrm{Hom}_{\mathfrak{g}}(P\otimes_k F,M)).$$
Also, by Lemma \ref{lem: gmod lemma}, we have
\[\mathrm{Hom}_{\mathfrak{g}}(P\otimes_k F,M) \cong \mathrm{Hom}_{\mathfrak{g}}(P,\mathrm{Hom}_k(F,M)).\]
Furthermore, since $\mathfrak{n}$ acts trivially on $P_p$ for each $p$, it follows from Lemmas \ref{lem: gmod lemma} and \ref{lem: gmod lemma2} that
\[ \mathrm{Hom}_{\mathfrak{g}}(P_q,\mathrm{Hom}_k(F_q,M))=\mathrm{Hom}_{\mathfrak{h}}(P_p,\mathrm{Hom}_{\mathfrak{n}}(F_q,M))\]
for all $p$ and $q$. We conclude that $\mathrm{H}^{\ast}(\mathfrak{g},M)$ can be calculated by taking the cohomology of $\mathrm{Hom}_{\mathfrak{h}}(P,\mathrm{Hom}_{\mathfrak{n}}(F,M))$.
\end{proof}
\indent Filtering by columns, we can obtain a canonically bounded filtration of the (total) Hom cochain complex $\mathrm{Hom}_{\mathfrak{h}}(P,\mathrm{Hom}_{\mathfrak{n}}(F,M))$. By constructing the spectral sequence associated to this filtration and using the proposition above, we obtain a convergent first quadrant spectral sequence
\begin{equation} E^{p,q}_2= \mathrm{H}^p(\mathfrak{h},\mathrm{H}^q(\mathfrak{n},M)) \Rightarrow \mathrm{H}^{p+q}(\mathfrak{g},M). \label{eq: spectral sequence} \end{equation}
Moreover, it is not difficult to see that this spectral sequence coincides with the Hochschild-Serre spectral sequence from the second page onward.
We will use this different construction of the Hochschild-Serre spectral sequence to prove a generalization of Theorem $2$ from \cite{Barnes}, but first we need a lemma.
\begin{lemma} Suppose $(C,d_h,d_v)$ is a first quadrant double complex with the vertical differential
$d_v^{p+1,q-1}: C^{p+1,q-1} \rightarrow C^{p+1,q}$
zero for some $p$ and $q$. Then the differentials $d^{p,q}_r$ and $d^{p-r+2,q+r-2}_r$, from the convergent first quadrant spectral sequence
\[ {}^{I}E_2^{p,q}=\mathrm{H}^p_{h}\mathrm{H}^q_v(C) \Rightarrow \mathrm{H}^{p+q}(\mathscr{C}), \]
obtained by filtering $C$ columnwise, are zero for all $r\geq 2$.
\end{lemma}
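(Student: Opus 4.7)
My plan is to use the zigzag description of the higher differentials on $E_r$. With the columnwise filtration, a class in $E_r^{s,t}$ is represented by a tuple $(c_0, c_1, \ldots, c_{r-1})$ with $c_i \in C^{s+i, t-i}$ satisfying $d_v c_0 = 0$ and $d_h c_i + d_v c_{i+1} = 0$ for $0 \leq i \leq r-2$, and the differential is given by $d_r([c_0]_r) = [d_h c_{r-1}]_r \in E_r^{s+r, t-r+1}$, independently of the choice of auxiliary entries $c_1, \ldots, c_{r-1}$. The central observation is that the hypothesis forces every element of $C^{p+1, q-1}$ to be a $d_v$-cocycle, so any zigzag entry landing in this bidegree is automatically compatible with the $d_v$-condition.

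For $d_r^{p,q}$, I would take any representative $(c_0, c_1, \ldots, c_{r-1})$ of a class in $E_r^{p,q}$. Since $c_1 \in C^{p+1, q-1}$, the hypothesis gives $d_v c_1 = 0$, and the zigzag relation $d_h c_0 + d_v c_1 = 0$ collapses to $d_h c_0 = 0$. Then the alternative sequence $(c_0, 0, 0, \ldots, 0)$ is itself a valid zigzag starting with $c_0$: the conditions reduce to $d_v c_0 = 0$, $d_h c_0 = 0$, and $0=0$. Computing $d_r$ through this alternative representative yields $d_r^{p,q}([c_0]_r) = [d_h \cdot 0]_r = 0$.

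For $d_r^{p-r+2, q+r-2}$, the argument is symmetric. A zigzag $(e_0, e_1, \ldots, e_{r-1})$ representing a class at bidegree $(p-r+2, q+r-2)$ has its last entry $e_{r-1}$ in $C^{p+1, q-1}$; the hypothesis gives $d_v e_{r-1} = 0$, so the last zigzag relation $d_h e_{r-2} + d_v e_{r-1} = 0$ forces $d_h e_{r-2} = 0$. The truncated sequence $(e_0, e_1, \ldots, e_{r-2}, 0)$ is then valid (the final condition becomes $d_h e_{r-2} + d_v 0 = d_h e_{r-2} = 0$), so $d_r^{p-r+2, q+r-2}([e_0]_r) = [d_h \cdot 0]_r = 0$.

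The only real subtlety is setting up the zigzag formalism carefully and invoking the fact that different choices of auxiliary entries yield the same class on the $E_r$-page; this is standard. Sign conventions (in particular the $(-1)^p$ factor appearing in the paper's Hom double complex) do not affect the structural conclusion, since a zero entry is a cocycle regardless of sign. Once the zigzag description of $d_r$ is in place, both assertions collapse to essentially the same two-line verification keyed to the single bidegree $(p+1, q-1)$ where $d_v$ vanishes; the two differentials are handled by truncating the zigzag either immediately after $c_0$ (for $d_r^{p,q}$) or immediately before the terminal application of $d_h$ (for $d_r^{p-r+2, q+r-2}$).
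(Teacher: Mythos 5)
Your proof is correct and is essentially the paper's argument in different notation: the zigzag description of $E_r$ and $d_r$ is the component-wise unpacking of the filtration groups $Z_r^{p,q}$ and $B_r^{p,q}$ used in the paper, and in both cases the key step is that the vanishing of $d_v^{p+1,q-1}$ forces the horizontal image of the adjacent zigzag entry ($d_h c_0$, resp.\ $d_h e_{r-2}$) to be zero, so the representative can be truncated at that spot. The standard facts you invoke (existence of zigzag representatives and independence of the auxiliary entries) are exactly what the paper's decomposition $x=f+x'$ and the inclusion $d(x')\in B_{r-1}$ encode.
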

\begin{proof}
Recall that $\mathscr{C}$ is the cochain complex with
$ \mathscr{C}^n = \bigoplus_{k+l=n}C^{k,l}$,
and the differential $d$ is defined by $d_h+d_v$. The filtration of $\mathscr{C}$ is given by
$ F^p\mathscr{C}^n = \bigoplus_{\substack{k+l=n\\ k \geq p}}C^{k,l}$.
By definition we have
$E_r^{p,q}= {Z^{p,q}_r}/(Z^{p+1,q-1}_{r-1}+B^{p,q}_{r-1})$,
with
\begin{eqnarray*} Z^{p,q}_r  & = & F^p\mathscr{C}^{p+q}\cap d^{-1}\Big(F^{p+r}\mathscr{C}^{p+q+1}\Big),\\
 B^{p,q}_r  & = & F^p\mathscr{C}^{p+q}\cap d\Big(F^{p-r}\mathscr{C}^{p+q-1}\Big). \end{eqnarray*}
 Also, the differentials $d^{p,q}_r : E_r^{p,q} \rightarrow E_r^{p+r,q-r+1}$ are induced by the restriction of $d$ to $Z^{p,q}_r$.

 Now, let $[x] \in E_r^{p,q}$ where $x \in Z_r^{p,q}$. We can write $x=f+x'$ with $f \in C^{p,q}$ and $x' \in F^{p+1}\mathscr{C}^{p+q}$. Since $d_v^{p+1,q-1}=0$, we have $d(x)=d(x')$ (if $r\geq 2$). This means that $d(x) \in F^{p+r}\mathscr{C}^{p+q+1}\cap d\Big(F^{p+1}\mathscr{C}^{p+q}\Big)= B^{p+r,q-r+1}_{r-1}$ showing that $d^{p,q}_r([x])=0$. Since $[x]$ and $r$ are arbitrary, we conclude that $d^{p,q}_r=0$ for all $r\geq 0$.

Similarly, take $[x] \in E_r^{p-r+2,q+r-2}$ where $x \in Z_r^{p-r+2,q+r-2} \subset F^{p-r+2}\mathscr{C}^{p+q}$. Then $d^{p-r+2,q+r-2}_r([x])=[d(x)] \in E_r^{p+2,q-1}$. We will show that $d(x) \in B_{r-1}^{p+2,q-1}$. Denote by $x'$ the image of $x$ under the projection of $ F^{p-r+2}\mathscr{C}^{p+q}$ onto $ F^{p+1}\mathscr{C}^{p+q}$. Because $d_v^{p+1,q-1}=0$, one can easily verify that $d(x)=d(x')$. But this implies that $d(x) \in  B_{r-1}^{p+2,q-1}$, because $F^{p+1}\mathscr{C}^{p+q}\subset F^{p-r+3}\mathscr{C}^{p+q}$ for $r \geq 2$. By definition of $E_r^{p+2,q-1}$, this means that $d^{p-r+2,q+r-2}_r([x])=0$. Since $[x]$ and $r$ are arbitrary, we conclude that $d^{p-r+2,q+r-2}_r=0$ for all $r\geq 0$.
\end{proof}

\begin{theorem} \label{th: collapse}
Suppose
$0 \rightarrow \mathfrak{n} \rightarrow \mathfrak{g} \rightarrow \mathfrak{h} \rightarrow 0$
is a split extension of Lie algebras. Let M be a $\mathfrak{g}$-module and denote by $(E_r,d_r)$ the associated Hochschild-Serre spectral sequence.
If $\mathfrak{h}$ acts compatibly on a free  $U(\mathfrak{n})$-resolution $F$ such that the differential
\[ d^{q-1}: \mathrm{Hom}_{\mathfrak{n}}(F_{q-1},M) \rightarrow \mathrm{Hom}_{\mathfrak{n}}(F_q,M) \]
is zero, then $d^{p,q}_r$ and $d^{p,q+r-2}_r$ are zero for all $p$ and all $r\geq 2$.
\end{theorem}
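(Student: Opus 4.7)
The plan is to realize the Hochschild-Serre spectral sequence as the columnwise spectral sequence of the double complex of Proposition \ref{prop: uberprop} and then reduce everything to the preceding lemma. Concretely, by that proposition together with the remark following it, the Hochschild-Serre spectral sequence $(E_r, d_r)$ agrees from the $E_2$-page onward with the first spectral sequence of the double complex
\[ C^{p,q} = \mathrm{Hom}_{\mathfrak{h}}(P_p, \mathrm{Hom}_{\mathfrak{n}}(F_q, M)), \]
where $P \to k$ is any free $U(\mathfrak{h})$-resolution. So it suffices to prove the stated vanishings for the differentials of this columnwise spectral sequence.

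Next, I would observe that the vertical differential $d_v^{p, q-1}: C^{p, q-1} \to C^{p, q}$ is, by the convention in Section 2, the map $f \mapsto (-1)^p d^{q-1} \circ f$, where $d^{q-1}$ is the differential of the cochain complex $\mathrm{Hom}_{\mathfrak{n}}(F, M)$. Hence the hypothesis $d^{q-1} = 0$ immediately yields $d_v^{p, q-1} = 0$ for \emph{every} index $p$; this is the only input the preceding lemma needs.

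Finally, I would apply the preceding lemma in two ways to extract the two vanishings. Fix an arbitrary $p$. Applying the lemma at bidegree $(p, q)$ uses $d_v^{p+1, q-1} = 0$, which holds, and yields $d^{p,q}_r = 0$ for all $r \geq 2$. To get $d^{p, q+r-2}_r = 0$, apply the lemma at bidegree $(p + r - 2,\, q)$: the required hypothesis is $d_v^{p+r-1, q-1} = 0$, which again holds, and the second conclusion of the lemma reads $d^{(p+r-2) - r + 2,\, q + r - 2}_r = d^{p,\, q+r-2}_r = 0$. Since $p$ was arbitrary, this completes the proof.

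The main obstacle is not substantive but bookkeeping: one must carefully match the index conventions between the theorem statement and the lemma, and verify the sign convention that identifies vanishing of $d^{q-1}$ with vanishing of $d_v^{p, q-1}$ for all $p$. All the genuine content sits in Proposition \ref{prop: uberprop} (which gives the alternative double complex) and in the preceding lemma (whose diagram-chase supplies the vanishing of the higher differentials); the theorem itself is essentially their immediate combination.
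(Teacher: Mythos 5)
Your proof is correct and follows essentially the same route as the paper: the hypothesis $d^{q-1}=0$ kills the vertical differentials $d_v^{p,q-1}$ of the double complex $\mathrm{Hom}_{\mathfrak{h}}(P,\mathrm{Hom}_{\mathfrak{n}}(F,M))$ for all $p$, and the preceding lemma then gives both vanishings. Your explicit re-indexing (applying the lemma at bidegree $(p+r-2,q)$ to obtain $d_r^{p,q+r-2}=0$) is exactly the bookkeeping the paper leaves implicit.
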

\begin{proof}
If
$d^{q-1}: \mathrm{Hom}_{\mathfrak{n}}(F_{q-1},M) \rightarrow \mathrm{Hom}_{\mathfrak{n}}(F_q,M)$
is zero, then the vertical differentials $d_v^{p,q-1}$ of the double complex
$\mathrm{Hom}_{\mathfrak{h}}(P,\mathrm{Hom}_{\mathfrak{n}}(F,M))$ are zero for all $p$. It now follows from the previous lemma that $d^{p,q}_r$ and $d^{p,q+r-2}_r$ are zero for all $p$ and all $r\geq 2$.
\end{proof}
\begin{theorem} \label{cor: collapse} Let
$0 \rightarrow \mathfrak{n} \rightarrow \mathfrak{g} \rightarrow \mathfrak{h} \rightarrow 0$ be the split extension
determined by $\varphi: \mathfrak{h} \rightarrow \mathrm{Der}(\mathfrak{n})$ and let $M$ be a $\mathfrak{g}$-module such that $M^{\mathfrak{n}}=M$. Then the Hochschild-Serre spectral sequence associated to this extension with coefficients in $M$ collapses at $E_2$ in the following cases
\begin{itemize}
\item[(a)] $\mathfrak{n}=\mathfrak{n}_1 \oplus \mathfrak{n}_2$, where $\mathfrak{n}_1$ is either abelian or free and $\mathfrak{n}_2$ is either abelian or free;

\smallskip

\item[(b)] $\mathfrak{n}=\mathfrak{n}_1 * \mathfrak{n}_2* \ldots * \mathfrak{n}_k$, where each of the $\mathfrak{n}_i$ is either abelian or free and $\varphi(\alpha)(\mathfrak{n}_i)\subseteq \mathfrak{n}_i$ $\forall \alpha \in \mathfrak{h}$ and $i=1,\ldots, k$.
\end{itemize}
\end{theorem}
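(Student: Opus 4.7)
The plan is to apply Theorem \ref{th: collapse} uniformly in both cases: if I can exhibit a free $U(\mathfrak{n})$-resolution $F$ with a compatible action of $\mathfrak{h}$ such that the vertical differentials $d^{q-1}\colon \mathrm{Hom}_{\mathfrak{n}}(F_{q-1},M)\to\mathrm{Hom}_{\mathfrak{n}}(F_q,M)$ vanish for every $q\geq 1$, then Theorem \ref{th: collapse} applied with each $q$ yields $d^{p,q}_r=0$ for all $p$, all $q$, and all $r\geq 2$, forcing $E_2=E_\infty$. The guiding observation is that whenever $F_\ast = U(\mathfrak{n})\otimes_k A_\ast$ is a resolution whose boundary map is a sum of left multiplications in $U(\mathfrak{n})$ by elements of the augmentation ideal of $\mathfrak{n}$ (possibly twisted by a map on the $A_\ast$ factor), the isomorphism $\mathrm{Hom}_{\mathfrak{n}}(F_q,M)\cong\mathrm{Hom}_k(A_q,M)$ converts the dual differential into the action of those elements on $M$, which is zero since $M^{\mathfrak{n}}=M$.

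For case (a), I would split into three subcases. If both $\mathfrak{n}_1$ and $\mathfrak{n}_2$ are abelian, then $\mathfrak{n}$ is itself abelian, and the Chevalley-Eilenberg resolution $V(\mathfrak{n})$ of Proposition \ref{prop: comp act on chev} suffices: the bracket contributions to the boundary vanish, leaving only the multiplication-by-$x_i$ terms, which die in the Hom. If both are free, Example \ref{ex: product of free lie algebras} combined with Lemma \ref{lemma: kernel product} yields a compatible action on the tensor product of two copies of the two-term resolution (\ref{eq: resolution for free}); the tensor-product differential $d_1\otimes 1\pm 1\otimes d_2$ is a sum of two left multiplications by elements of $\mathfrak{n}_1\oplus \mathfrak{n}_2$ and therefore kills $M$. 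If exactly one factor is free, the resolution and compatible action explicitly constructed in Example \ref{ex: product free and abelian} provide what is needed; inspection of the displayed formulas for $d_p$ and $d_{n+1}$ confirms that each summand of the boundary is a left multiplication by an element of $\mathfrak{n}=\mathfrak{f}\oplus k^n$.

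For case (b), I would use Lemma \ref{ex: free product} and the remark following it, replacing the Chevalley-Eilenberg complex of each free factor $\mathfrak{n}_i$ by the two-term resolution (\ref{eq: resolution for free}). Since $\varphi(\alpha)(\mathfrak{n}_i)\subseteq \mathfrak{n}_i$ by hypothesis, the construction still produces a compatible $\mathfrak{h}$-action on the resulting resolution, and the boundary on each summand $U(\mathfrak{n})\otimes_{U(\mathfrak{n}_i)} V_\ast(\mathfrak{n}_i)$ (or its free-algebra analogue) is induced from left multiplication by elements of $\mathfrak{n}_i$, which again vanishes under $\mathrm{Hom}_{\mathfrak{n}}(-,M)$ with $M^{\mathfrak{n}}=M$. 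The main bookkeeping obstacle will be the mixed subcase of (a), where the boundary from Example \ref{ex: product free and abelian} has several distinct types of terms; verifying that each one really is a left multiplication by some element of $\mathfrak{n}$ is conceptually routine but requires careful attention to the notational layers of the resolution.
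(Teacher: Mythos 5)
Your proposal is correct and follows essentially the same route as the paper: split case (a) into the abelian--abelian, free--free, and mixed subcases, invoke Proposition \ref{prop: comp act on chev}, Example \ref{ex: product of free lie algebras}, and Example \ref{ex: product free and abelian} respectively to get a compatible action on a resolution whose dual differentials vanish because $M^{\mathfrak{n}}=M$, then apply Theorem \ref{th: collapse} for every $q$; case (b) via Lemma \ref{ex: free product} is likewise identical. The only cosmetic difference is that the paper cites the K\"unneth formula in the free--free subcase where you argue directly that each boundary term is multiplication by an element of the augmentation ideal, which is an equally valid (arguably more transparent) verification.
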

\begin{proof}
First suppose that $\mathfrak{n}$ is abelian. We know that $\mathfrak{h}$ acts compatibly on the Chevalley-Eilenberg complex $V(\mathfrak{n})$ of $\mathfrak{n}$. Since $\mathfrak{n}$ acts trivially on $M$, the differential
$$d^{q-1}: \mathrm{Hom}_{\mathfrak{n}}(V_{q-1}(\mathfrak{n}),M)\rightarrow \mathrm{Hom}_{\mathfrak{n}}(V_q(\mathfrak{n}),M)$$
is zero for all $q$. Hence, it follows that $d_r^{p,q}=0$ for all $p,q$ and $r\geq2$ which means that the spectral sequence collapses at $E_2$.

Now, assume that $\mathfrak{n}=\mathfrak{n}_1 \oplus \mathfrak{n}_2$  with $\mathfrak{n}_1$ and $\mathfrak{n}_2$ both free. In Example \ref{ex: product of free lie algebras}, we constructed a resolution for $\mathfrak{n}$ that allows a compatible action in any case and one easily checks, by for example using the K\"{u}nneth formula, that this resolution has zero differentials after applying $\mathrm{Hom}_{\mathfrak{n}}(-,M)$, when $\mathfrak{n}$ acts trivially on $M$. So, just as before we obtain the desired collapse. \\
If $\mathfrak{n}=\mathfrak{n}_1 \oplus \mathfrak{n}_2$, with $\mathfrak{n}_1$ free and $\mathfrak{n}_2$ abelian, then Example \ref{ex: product free and abelian} provides a free resolution of $\mathfrak{n}$ with compatible action that has zero differentials after applying $\mathrm{Hom}_{\mathfrak{n}}(-,M)$ because $\mathfrak{n}$ acts trivially on $M$. Thus, the collapse follows and case $(a)$ of the corollary is proven.

Part (b) is proven similarly by considering the compatible actions constructed in Lemma \ref{ex: free product}.
\end{proof}

Suppose that (\ref{eq: extension}) is a split extension with a finite dimensional kernel and consider its associated Hochschild-Serre spectral sequence with
coefficients in a $\mathfrak{g}$-module $M$,
\[ E^{p,q}_2= \mathrm{H}^p(\mathfrak{h},\mathrm{H}^q(\mathfrak{n},M)) \Rightarrow \mathrm{H}^{p+q}(\mathfrak{g},M). \]
It is clear that at some page $t$ the Hochschild-Serre spectral sequence will collapse, i.e. $E_r=E_{\infty}$ for all $r\geq t$. We define the \emph{length} $l$ of the spectral sequence to be the smallest $t$ for which $E_t=E_{\infty}$. This means that $d_r=0$ for all $r\geq l$, but $d_{l-1}\neq 0$. Using the previous theorem we can now prove the following.
\begin{theorem}  Suppose $0 \rightarrow \mathfrak{n} \rightarrow \mathfrak{g} \rightarrow \mathfrak{h} \rightarrow 0$
is a split extension of Lie algebras such that $\dim_k(\mathfrak{\mathfrak{n}})=m < \infty$ . Denote by $(E_r,d_r)$ the associated Hochschild-Serre spectral sequence with coefficients in the $\mathfrak{g}$-module $M$.
If $\mathfrak{n}$ acts trivially on $M$, then
\begin{itemize}
\item[(a)] $d_r^{p, m}=0$ for all $p$ and all $r\geq 2$;

\smallskip

\item[(b)] $l \leq \max{\{2, m\}}$;

\smallskip

\item[(c)] $\mathrm{H}^p(\mathfrak{h}, \mathrm{H}^{m}(\mathfrak{n},M))\oplus\mathrm{H}^{p+m}(\mathfrak{h}, M)\subseteq \mathrm{H}^{p+m}(\mathfrak{g},M)$ for all $p$.
\end{itemize}
\end{theorem}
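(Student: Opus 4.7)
The plan is to apply Theorem \ref{th: collapse} to the Chevalley--Eilenberg resolution $V(\mathfrak{n})$, which admits a compatible $\mathfrak{h}$-action by Proposition \ref{prop: comp act on chev}. Two structural facts drive the argument. First, since $\dim_k(\mathfrak{n}) = m$, we have $V_q(\mathfrak{n}) = 0$ for $q > m$ and hence $d^q = 0$ in $\mathrm{Hom}_{\mathfrak{n}}(V(\mathfrak{n}), M)$ for all $q \geq m$; second, since $\mathfrak{n}$ acts trivially on $M$, the degree-zero coboundary $d^0 : M \to \mathrm{Hom}_k(\mathfrak{n}, M)$ sending $m \mapsto (x \mapsto xm)$ vanishes. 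Applying Theorem \ref{th: collapse} with $q = 1$ yields $d_r^{p, 1} = 0$ and $d_r^{p, r-1} = 0$ for all $p$ and $r \geq 2$; specializing $r = q+1$ gives the diagonal vanishing $d_{q+1}^{p, q} = 0$ for every $q \geq 1$.

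For part (a), the diagonal immediately yields $d_{m+1}^{p, m} = 0$, and $d_r^{p, m} = 0$ holds trivially for $r \geq m+2$ since the target $E_r^{p+r, m-r+1}$ sits in a row with negative index. The essential range is $r \in \{2, \ldots, m\}$, and here I would prove the equivalent statement $E_2^{p, m} = E_\infty^{p, m}$ by constructing an explicit chain-level lift. Given a cocycle representative $\tilde\phi : \Lambda^p\mathfrak{h} \to \mathrm{Hom}_k(\Lambda^m\mathfrak{n}, M)$ of a class in $E_2^{p, m} = \mathrm{H}^p(\mathfrak{h}, \mathrm{H}^m(\mathfrak{n}, M))$ and using the vector-space splitting $\mathfrak{g} = \mathfrak{n} \oplus \mathfrak{h}$, I would define $\Phi_0 \in C^{p+m}(\mathfrak{g}, M)$ by $\Phi_0(X \otimes H) = \tilde\phi(H)(X)$ on the top component $\Lambda^m\mathfrak{n} \otimes \Lambda^p\mathfrak{h}$ of $\Lambda^{p+m}\mathfrak{g}$, and zero on all other components. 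A bidegree analysis shows that the only possibly nonzero component of $d\Phi_0$ is at bidegree $(m, p+1)$, and this contribution equals (up to sign) $d^{\mathfrak{h}}\tilde\phi$ evaluated pointwise on $\Lambda^m\mathfrak{n}$; the cocycle condition on $\tilde\phi$ then gives $d^{\mathfrak{h}}\tilde\phi = d^{\mathfrak{n}}\tilde\phi^{(1)}$ for some $\tilde\phi^{(1)} : \Lambda^{p+1}\mathfrak{h} \to C^{m-1}(\mathfrak{n}, M)$. Introducing a correction $\Phi_1$ on $\Lambda^{m-1}\mathfrak{n} \otimes \Lambda^{p+1}\mathfrak{h}$ built from $\tilde\phi^{(1)}$ kills this obstruction, and one iterates, producing corrections $\Phi_k$ on $\Lambda^{m-k}\mathfrak{n} \otimes \Lambda^{p+k}\mathfrak{h}$ from cochains $\tilde\phi^{(k)}$ satisfying $d^{\mathfrak{h}}\tilde\phi^{(k-1)} = d^{\mathfrak{n}}\tilde\phi^{(k)}$. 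The process terminates after $m$ steps because $C^{-1}(\mathfrak{n}, M) = 0$, and the class $[\Phi_0 + \Phi_1 + \cdots + \Phi_m] \in \mathrm{H}^{p+m}(\mathfrak{g}, M)$ lifts $[\phi]$, establishing $E_\infty^{p, m} = E_2^{p, m}$ and hence (a).

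For part (b), a differential $d_r^{p, q}$ is nonzero only when $r - 1 \leq q \leq m$; the only cases with $r \geq \max\{2, m\}$ are $(r, q) \in \{(m, m-1), (m, m), (m+1, m)\}$, all handled by the diagonal vanishing and by (a). For part (c), two complementary injections into $\mathrm{H}^{p+m}(\mathfrak{g}, M)$ combine to give the direct sum. The inflation $\pi^* : \mathrm{H}^{p+m}(\mathfrak{h}, M) \hookrightarrow \mathrm{H}^{p+m}(\mathfrak{g}, M)$ is split injective via the splitting of the extension and lands in $E_\infty^{p+m, 0}$ at the highest filtration level; meanwhile the lift from (a) realizes $\mathrm{H}^p(\mathfrak{h}, \mathrm{H}^m(\mathfrak{n}, M)) = E_\infty^{p, m}$ as a submodule at filtration degree $p$. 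Living at distinct filtration levels of the Hochschild--Serre filtration on $\mathrm{H}^{p+m}(\mathfrak{g}, M)$, the two images have trivial intersection and thus form a direct sum. The main obstacle will be the sign bookkeeping in the iterative cocycle lift of (a), which must reconcile the signs of $d^{\mathfrak{h}}$ coming from the compatible-action structure with those in the Chevalley--Eilenberg differential for $\mathfrak{g}$.
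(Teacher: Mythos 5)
Your reductions of (b) and (c) to part (a), and your use of $d^0=0$ together with Theorem \ref{th: collapse} at $q=1$ to kill the differentials landing on the bottom row, are sound and essentially parallel the paper. The gap is in your proof of (a) for the range $2\le r\le m$, which is where all the content lies. Your iterative lifting scheme is justified only at the first step: the cocycle condition for a class in $E_2^{p,m}$ does give $d^{\mathfrak{h}}\tilde\phi=d^{\mathfrak{n}}\tilde\phi^{(1)}$, but at every later stage you simply posit $d^{\mathfrak{h}}\tilde\phi^{(k-1)}=d^{\mathfrak{n}}\tilde\phi^{(k)}$. What you actually know is only that $d^{\mathfrak{n}}\bigl(d^{\mathfrak{h}}\tilde\phi^{(k-1)}\bigr)=0$, i.e.\ the obstruction is pointwise an $\mathfrak{n}$-cocycle, not an $\mathfrak{n}$-coboundary; its class in $C^{p+k}(\mathfrak{h},\mathrm{H}^{m-k+1}(\mathfrak{n},M))$ is precisely a representative of the higher differential applied to $[\tilde\phi]$. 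So the solvability of each step of your zig-zag is equivalent to the vanishing of $d_r^{p,m}$ that you are trying to prove; as written the argument is circular, and nothing in it uses the hypothesis that $\mathfrak{n}$ acts trivially on $M$ beyond degree $0$, whereas the statement is false without a genuine input in top degree.

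The paper closes exactly this gap with a dichotomy that your proposal never invokes: since $\dim_k\mathfrak{n}=m$ and $\mathfrak{n}$ acts trivially on $M$, the top vertical differential $d^{m-1}:\mathrm{Hom}_{\mathfrak{n}}(V_{m-1}(\mathfrak{n}),M)\to\mathrm{Hom}_{\mathfrak{n}}(V_m(\mathfrak{n}),M)$ is dual to a map into the one-dimensional space $\Lambda^m\mathfrak{n}$, hence is either surjective or zero; accordingly $\mathrm{H}^m(\mathfrak{n},M)=0$ (so $E_r^{p,m}=0$ and there is nothing to prove) or $\mathrm{H}^m(\mathfrak{n},M)\cong M$ and $d^{m-1}=0$, in which case Theorem \ref{th: collapse} applied with $q=m$ to the Chevalley--Eilenberg complex (which always carries a compatible $\mathfrak{h}$-action by Proposition \ref{prop: comp act on chev}) gives $d_r^{p,m}=0$ for all $r\ge 2$ at once. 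Note that the same dichotomy would also rescue your chain-level picture without any iteration: in the unimodular case $B^m(\mathfrak{n},M)=0$ forces $d^{\mathfrak{h}}\tilde\phi=0$ on the nose, so $\Phi_0$ is already a total cocycle, and in the other case $E_2^{p,m}=0$. You should either adopt this dichotomy or supply an independent argument for why your successive obstructions vanish; the sign bookkeeping you flag is not the real difficulty.
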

\begin{proof}
%
Since $\mathfrak{n}$ acts trivially on $M$, either $\mathrm{H}^m(\mathfrak{n},M)=0$ or $\mathrm{H}^m(\mathfrak{n},M)\cong M$. If $\mathrm{H}^m(\mathfrak{n},M)=0$, then $E^{p,m}_r=0$ for all $p$ and all $r\geq1$. This of course implies $d_r^{p,m}=0$ for all $p$ and all $r\geq 2$. If $\mathrm{H}^m(\mathfrak{n},M)=M$, then $ d^{m-1}: \mathrm{Hom}_{\mathfrak{n}}((V_{m-1}(\mathfrak{n}),M) \rightarrow \mathrm{Hom}_{\mathfrak{n}}((V_{m}(\mathfrak{n}),M)$ is zero. Since we always have a compatible action on the Chevalley-Eilenberg complex, Theorem \ref{th: collapse} implies that $d_r^{p,m}=0$ for all $p$ and all $r\geq 2$, so part (a) is proven.

Since $\mathfrak{n}$ acts trivially on $M$, we know that the differential $d^{0}:
\mathrm{Hom}_{\mathfrak{n}}((V_{0}(\mathfrak{n}),M) \rightarrow \mathrm{Hom}_{\mathfrak{n}}((V_{1}(\mathfrak{n}),M)$ is zero. It follows that all differentials $d_r$, for $r \geq 2$, that land on the bottom row of the spectral sequence are also zero. We conclude that $l \leq \max{\{2,m\}}$. This finishes (b).

A priori we have $E_{\infty}^{p,m}\oplus E_{\infty}^{p+m, 0}\subseteq \mathrm{H}^{p+m}(\mathfrak{g},M)$ and $E_{\infty}^{p+m, 0} =\mathrm{H}^{p+m}(\mathfrak{h},M)$ for all $p$. By part (a), $E_{\infty}^{p,m}=E_{m+1}^{p,m}= \dots = E_2^{p,m}$ for all $p$ and  $E_2^{p,m}\cong  \mathrm{H}^p(\mathfrak{h}, \mathrm{H}^{m}(\mathfrak{n},M))$. This proves part (c).
\end{proof}

\begin{remark} \rm
\begin{itemize}\item[]
\item[-] Since the extension splits and $\mathfrak{n}$ acts trivially on $M$, we know that the homomorphisms $\mathrm{H}^p(\mathfrak{h},M) \rightarrow  \mathrm{H}^p(\mathfrak{g},M)$ are injective for every $p$. This is another way to see that all differentials $d_r$, for $r \geq 2$, that land on the bottom row of the spectral sequence are zero.
\item[-] In \cite{Barnes}, Barnes shows that the spectral sequence of split extensions of finite dimensional Lie algebra with abelian kernel collapses at the second page if the kernel acts trivially in the coefficients. Case (a) of Theorem \ref{cor: collapse} is a generalization of this result.
\end{itemize}
\end{remark}

\end{document}